\theoremstyle{plain}
\newtheorem{Thm}{Theorem}[section]
\newtheorem{df}[Thm]{Definition}
\newtheorem{Cor}[Thm]{Corollary}
\newtheorem{prop}[Thm]{Proposition}
\newtheorem{Lem}[Thm]{Lemma}
\newtheorem{Remark}{\textbf{Remark}}
\newtheorem{Main thm}{Main Theorem}
\newcommand{\five}{~~~~~}
\begin{document}
\title{\huge On alternating sum formulas for the Arakawa-Kaneko multiple zeta values}
\author{Yuta Nishimura}
\date{}
\maketitle

\begin{abstract}
In this paper, we construct generating functions of the Arakawa-Kaneko zeta values by the similar method to \cite{CE}. From the expressions, we show alternating sum formulas for them. Based on these results, we apply the same method to other zeta values.
\end{abstract}

\section{Introduction}
For integers $k_1,\ldots,k_{r-1}\geq1$ and $k_r\geq2$, the multiple zeta and zeta-star  value are defined respectively as follows:
\[
\zeta(k_1,\ldots,k_r)\coloneqq \sum_{0<m_1<\cdots<m_r}\frac{1}{m_1^{k_1}\cdots m_r^{k_r}},
\] 
\[
\zeta^{\star}(k_1,\ldots,k_r)\coloneqq \sum_{0<m_1\leq \cdots \leq m_r}\frac{1}{m_1^{k_1}\cdots m_r^{k_r}}.
\]
For an index $\textbf{k}=(k_1,k_2,\cdots,k_r)$, we define $|\textbf{k}|:=k_1+k_2+\cdots+k_r$ and dep$(\textbf{k}):=r$, called the weight and the depth, respectively. If $k_1,\ldots k_{r-1}\geq 1$ and $k_r\geq 2$, we call $\mathbf{k}$ admissible.\\
In this paper, we investigate the Arakawa-Kaneko multiple zeta function (which was introduced in \cite{AK}):
\begin{equation}
\xi(k_1,\ldots,k_r;s)\coloneqq \frac{1}{\Gamma(s)}\int^{\infty}_{0}\frac{t^{s-1}}{e^t-1}\mathrm{Li}_{k_1,\ldots,k_r}(1-e^{-t})dt, \nonumber
\end{equation}
where $r\geq1,~~k_1\ldots k_{r}\geq1,~~s\in \mathbb{C}~(\Re s> 0)$. Their special values at positive integers were studied in many papers. Our first main result concerns this value.
 
\begin{Main thm}[{Theorem \ref{main1}}]\
\begin{enumerate}[label=$\mathrm{(\arabic*)}$]
\item
For $k\geq 0$, we have
\[
\sum_{r+k_1+k_2=k}(-1)^{k_2}\xi(\{1\}^r,k_1+1;k_2+1)=
\left \{
\begin{array}{ll}
\zeta(k+2) &\five (k:\mathrm{even}),\\
0&\five(k:\mathrm{odd}).
\end{array}
\right.
\]

\item
For $r\geq1$ and $k\geq0$, we have
\begin{equation}
\begin{split}
\sum_{k_1+\cdots+ k_{r+1}=k}(-1)^{k_{r+1}}\xi&(k_1+1,\ldots,k_r+1;k_{r+1}+1)\\
&=
\sum_{a+2b=k}(-1)^a
\binom{a+r-1}{r-1}
\zeta^{\star}(a+r+1,\{2\}^b) .\nonumber
\end{split}
\end{equation}
\end{enumerate}
\end{Main thm}
Based on this result, we apply the same method to other multiple zeta values.

\begin{Main thm}[{Theorem \ref{main2}}]\
For $n,l\geq0$, we have
\begin{equation}
\begin{split}
\sum_{a+b=n}(-1)^b&\left(
\sum_{\substack{|\mathbf{k}|=l\\ \mathrm{dep(\mathbf{k})}=a+1}}\zeta^t(k_1+1,\ldots,k_a+1,k_{a+1}+b+2)\right)\\
&=
\sum_{a+2b=n}t^a\binom{a+l}{l}\zeta^{\star}(a+l+2,\{2\}^b),\nonumber
\end{split}
\end{equation}
where $\zeta^t$ is the interpolated multiple value $($introduced in \cite{Y1}$)$.
\end{Main thm}

\begin{Main thm}[Theorem\ref{main3}]
For $k,l\geq0$, we have
\begin{equation}
\begin{split}
\sum_{r+m=k}(-1)^m&
\left(
\sum_{k_1+\cdots+k_{r+1}=l}\xi_t(k_1+1,\ldots,k_{r+1}+1;m+1)
\right)\\
&=
\sum_{a+b=k}(-1)^at^b\binom{b+l}{b}\zeta(\{1\}^a,b+l+2),\nonumber
\end{split}
\end{equation}
where $\xi_t$ is the \textrm{t-Arakawa-Kaneko multiple zeta function} which we will introduce in later section.
\end{Main thm}

In this paper, $E_r$ denotes the area 
\[
\{(u_1,\ldots,u_r)\in[0,1]^r | 0<u_1<\cdots<u_r<1\},
\]
where $r$ is a positive integer greater than or equal to 1.

\section{Alternating sum for the Arakawa-Kaneko multiple zeta values}
In \cite{KO}, $\xi(\textbf{k},m)$ was expressed  by the following integral expression called Yamamoto's integral expression (introduced in \cite{Y2}).

\begin{Thm}[{\cite{KO}}]
\label{xi-yint}
For $k_1,\ldots, k_r\geq1$ and $m\geq1$, we have
\[
\xi(k_1,\ldots,k_r;m)=
I \left(\begin{xy}{
{(0,-22)\ar @{{*}-}(0,-18)},
{(0,-18)\ar @{o.}(0,-10)},
{(0,-10) \ar @{o-} (0,-6)},
{(0,-6) \ar @{{*}.} (0,2)},
{(0,2) \ar @{{*}-o} (0,6)}, 
{(0,6) \ar @{.o} (0,14)}, 
{(0,14) \ar @{-o} (5,18)}, 
{(10,2) \ar @{{*}-{*}} (10,6)}, 
{(10,6) \ar @{.{*}} (10,14)}, 
{(10,14) \ar @{-} (5,18)}, 
{(-1,2) \ar @/^1mm/ @{-} ^{k_r} (-1,14)}, 
{(11,2) \ar @/_1mm/ @{-} _{m-1} (11,14)}, 
{(-1,-22)\ar @/^1mm/@{-} ^{k_1} (-1,-10)},
}\end{xy} \right).
\]
\end{Thm}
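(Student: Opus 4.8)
The plan is to turn $\xi(\mathbf{k};m)$ into an iterated integral on the unit interval by a change of variables and then to read off Yamamoto's integral of the indicated $2$-poset. First I would put $u=1-e^{-t}$ in the defining integral. Then $e^{t}-1=u/(1-u)$, $dt=du/(1-u)$ and $t=-\log(1-u)$, so the factor $1-u$ cancels and, for $\Re s>0$,
\[
\xi(k_1,\ldots,k_r;s)=\frac{1}{\Gamma(s)}\int_{0}^{1}\frac{(-\log(1-u))^{\,s-1}}{u}\,\mathrm{Li}_{k_1,\ldots,k_r}(u)\,du .
\]
Convergence is clear: near $u=0$ one has $\mathrm{Li}_{\mathbf{k}}(u)=O(u)$, which absorbs $1/u$, and near $u=1$ the integrand is bounded by a power of $\log\frac{1}{1-u}$, which is integrable on $(0,1)$.

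Next I would specialise $s=m\in\mathbb{Z}_{\ge1}$ and expand the two analytic factors as iterated integrals over sub-simplices of $(0,u)$. Since $-\log(1-u)=\int_0^u\frac{dv}{1-v}$,
\[
\frac{(-\log(1-u))^{\,m-1}}{(m-1)!}=\int_{0<v_1<\cdots<v_{m-1}<u}\ \prod_{i=1}^{m-1}\frac{dv_i}{1-v_i},
\]
while $\mathrm{Li}_{k_1,\ldots,k_r}(u)$ is the standard iterated integral over $0<t_1<\cdots<t_{|\mathbf{k}|}<u$ of the word obtained by concatenating, for $i=1,\ldots,r$, one copy of $\frac{dt}{1-t}$ followed by $k_i-1$ copies of $\frac{dt}{t}$ (reading from the variable nearest $0$). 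Substituting both expansions and pulling $\frac{du}{u}$ out front, $\xi(\mathbf{k};m)$ becomes $\int_0^1\frac{du}{u}$ integrated against the product of these two iterated integrals, each running from $0$ up to the same endpoint $u$.

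Finally I would appeal to the definition of Yamamoto's integral. For a $2$-labelled poset that is the disjoint union of two chains sharing a single common maximal vertex, the linear extensions are exactly the shuffles of the two chains with that vertex placed on top; hence its Yamamoto integral equals $\int_0^1\frac{du}{u}$ times the product of the iterated integrals along the two chains from $0$ to $u$. Matching labels via $\frac{dt}{1-t}\leftrightarrow\bullet$ and $\frac{dt}{t}\leftrightarrow\circ$, the left chain becomes the $\mathrm{Li}_{\mathbf{k}}$-word (for each block a $\bullet$ followed by $k_i-1$ circles, the blocks ordered with $k_1$ at the bottom and $k_r$ near the top), the right chain becomes $m-1$ copies of $\bullet$, and the apex is a single $\circ$ — which is exactly the diagram in the statement. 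Since all minimal vertices are $\bullet$ and the maximal vertex is $\circ$, the poset is admissible, in agreement with the convergence already observed.

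I expect the main difficulty to be bookkeeping rather than conceptual: making the orientation and the $\bullet/\circ$ conventions of the Yamamoto diagram agree with the order in which the forms are produced by the change of variables (which end of each chain is the variable near $0$ versus near $u$), and justifying the interchange of summation and integration in the iterated-integral expansions uniformly enough on $(0,1)$ that the endpoint behaviour causes no trouble.
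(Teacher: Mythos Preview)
Your argument is correct: the change of variables $u=1-e^{-t}$ turns the defining integral into $\frac{1}{(m-1)!}\int_0^1\frac{(-\log(1-u))^{m-1}}{u}\mathrm{Li}_{\mathbf{k}}(u)\,du$, and then expanding both $(-\log(1-u))^{m-1}/(m-1)!$ and $\mathrm{Li}_{\mathbf{k}}(u)$ as iterated integrals over simplices with upper endpoint $u$ gives exactly the Yamamoto integral of the displayed $2$-poset, since for a poset consisting of two chains joined only at a common maximal vertex the integration region factors below that vertex. Your label-matching with the diagram ($\bullet$ then $k_i-1$ $\circ$'s per block on the left chain, $m-1$ $\bullet$'s on the right, a single $\circ$ at the apex) is also correct.

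There is nothing to compare against in the paper itself: the statement is quoted from \cite{KO} and the paper gives no proof, using it purely as input for Proposition~\ref{xi-int}. What you have written is essentially the proof one finds in the Kawasaki--Ohno paper, so you are not diverging from the intended route, you are simply supplying what the present paper omits by citation. The only cosmetic point is that your phrase ``the disjoint union of two chains sharing a single common maximal vertex'' is slightly self-contradictory; you mean the poset obtained by identifying the tops of two chains (equivalently, adding a new maximal element above the two chains), and it would be cleaner to say that directly.
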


From this, we get the iterated integral expression of the Arakawa-Kaneko multiple zeta values.

\begin{prop}\label{xi-int}\
\begin{enumerate} [label=$\mathrm{(\arabic*)}$]
\item For $k_1,k_2\geq0$ and $r\geq0$, we have
\begin{equation}
\begin{split}
\xi&(\{1\}^r,k_1+1;k_2+1)\\
&=\frac{1}{r!~k_1!~k_2!}\int_{E_2}\left(\log\frac{1}{1-u_1}\right)^r\left(\log\frac{u_2}{u_1}\right)^{k_1}\left(\log\frac{1}{1-u_2}\right)^{k_2}~\frac{du_1~du_2}{(1-u_1)u_2}.\nonumber
\end{split}
\end{equation}

\item For $k_1,\ldots,k_{r+1}\geq0$ and $r\geq1$, we have
\begin{equation}
\begin{split}
\xi&(k_1+1,\ldots,k_r+1;k_{r+1}+1)\\
=&\frac{1}{k_1!k_2!\cdots k_{r+1}!}\\
&\times\int_{E_{r+1}}\left(\log\frac{u_2}{u_1}\right)^{k_1}\cdots\left(\log\frac{u_{r+1}}{u_r}\right)^{k_r}\left(\log\frac{1}{1-u_{r+1}}\right)^{k_{r+1}}~\frac{du_1\cdots du_{r+1}}{(1-u_1)\cdots(1-u_r)u_{r+1}}.\nonumber
\end{split}
\end{equation}
\end{enumerate}
\end{prop}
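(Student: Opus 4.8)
The plan is to derive Proposition \ref{xi-int} directly from the Yamamoto-integral expression in Theorem \ref{xi-yint}, by unwinding the diagram $I(\cdots)$ into an ordinary iterated integral over $E_{r+1}$ (resp. $E_2$) and then evaluating the contributions of the ``dotted'' strings of vertices as iterated integrals of the one-form $dt/t$, which produce the logarithmic powers. Concretely, I would first recall the definition of Yamamoto's integral: a $2$-poset (here a diagram built from two vertical strings joined at a top vertex) is assigned the integral $\int_{\Delta}\prod \omega_v$, where the simplex $\Delta$ runs over all orderings of the vertices compatible with the poset, $\omega_v=dt/t$ for a ``$\circ$'' (white) vertex and $\omega_v=dt/(1-t)$ for a ``$\bullet$'' (black) vertex, and the dotted edges indicate that the intermediate vertices are to be linearly ordered along the string. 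For part (2) the left string carries, from bottom to top, the pattern coming from $k_1+1,\dots,k_r+1$ (each block $k_i+1$ contributing one black vertex followed by $k_i$ white vertices, i.e.\ the differential forms realizing $\mathrm{Li}_{k_i+1}$-type pieces), and the top piece together with the right string of $m-1=k_{r+1}$ white vertices encodes the factor $\left(\log\frac1{1-u_{r+1}}\right)^{k_{r+1}}$ of the outer integral defining $\xi$.

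The key computational step is the standard identity
\[
\frac{1}{k!}\left(\log\frac{b}{a}\right)^{k}=\int_{a<t_1<\cdots<t_k<b}\frac{dt_1}{t_1}\cdots\frac{dt_k}{t_k},
\qquad
\frac{1}{k!}\left(\log\frac{1}{1-b}\right)^{k}=\int_{0<t_1<\cdots<t_k<b}\frac{dt_1}{1-t_1}\cdots\frac{dt_k}{1-t_k}.
\]
Using this, each maximal run of consecutive white vertices of the form $dt/t$ between two ``anchor'' variables $u_i<u_{i+1}$ collapses to $\frac{1}{k_i!}\left(\log\frac{u_{i+1}}{u_i}\right)^{k_i}$, and the run of $\bullet$-vertices (for part (1), the string of $r$ black vertices below $u_1$) collapses to $\frac{1}{r!}\left(\log\frac1{1-u_1}\right)^{r}$ against the form $du_1/(1-u_1)$. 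The ``anchor'' variables $u_1,\dots,u_{r+1}$ are exactly the vertices carrying a form that is not purely ``internal'': in part (2) they are the $r$ black vertices $du_i/(1-u_i)$ on the left string ($i=1,\dots,r$) together with the single white vertex $du_{r+1}/u_{r+1}$ at the junction, and the $k_{r+1}$ white vertices on the right string integrate out to $\frac1{k_{r+1}!}\left(\log\frac1{1-u_{r+1}}\right)^{k_{r+1}}$ — here the crucial point is that in Yamamoto's diagram a white vertex above a black vertex on the ``$m-1$ string'' actually carries $dt/(1-t)$ because of how the integrand $1/(e^t-1)$ of $\xi$ is expanded, which is precisely why one gets $\log\frac1{1-u_{r+1}}$ rather than $\log$ of something else; I would state this carefully, matching it against Theorem \ref{xi-yint}.

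After substituting the collapsed forms, the remaining integral is exactly over the simplex $0<u_1<\cdots<u_{r+1}<1$, i.e.\ over $E_{r+1}$, with integrand
\[
\left(\log\tfrac{u_2}{u_1}\right)^{k_1}\cdots\left(\log\tfrac{u_{r+1}}{u_r}\right)^{k_r}\left(\log\tfrac1{1-u_{r+1}}\right)^{k_{r+1}}\frac{du_1\cdots du_{r+1}}{(1-u_1)\cdots(1-u_r)\,u_{r+1}},
\]
divided by $k_1!\cdots k_{r+1}!$, which is the asserted formula; part (1) is the same computation with the bottom block being a string of $r$ black vertices (producing $\frac1{r!}\left(\log\frac1{1-u_1}\right)^{r}$) instead of $r-1$ anchor variables, and with $r+1=2$ anchors. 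The main obstacle I expect is purely bookkeeping: correctly reading off from the Yamamoto diagram in Theorem \ref{xi-yint} which vertices are $\circ$ and which are $\bullet$ (the edge labels $k_r$, $m-1$, $k_1$ indicate the lengths of the dotted runs, but one must be careful about the terminal vertices of each run and about the junction vertex), and verifying that the set of ``anchor'' variables that survive are exactly the $r+1$ variables appearing in the claimed integral with the claimed $\frac{1}{(1-u_i)}$ and $\frac1{u_{r+1}}$ weights. Once the vertex types are pinned down, the rest is the elementary simplex-integration identity applied run by run, and Fubini to reassemble everything over $E_{r+1}$.
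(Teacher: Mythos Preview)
Your approach is correct and is essentially the paper's own: both start from Theorem~\ref{xi-yint} and collapse each maximal run of identical one-forms into a logarithmic power with a factorial denominator, the paper doing this via a diagrammatic chain-to-antichain move (replacing a totally ordered chain of $k$ identical vertices by an antichain, which introduces $1/k!$ in the Yamamoto integral) and you by the equivalent direct simplex integration. One small correction to your bookkeeping: the $m-1$ vertices on the right string in Theorem~\ref{xi-yint} are \emph{black} ($\bullet$), not white, and carry $dt/(1-t)$ simply by the standard Yamamoto convention, not by any special rule tied to the integrand $1/(e^t-1)$; your conclusion about the form is right, only the labeling is off.
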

\begin{proof}
(1) From Theorem \ref{xi-yint},
\begin{equation}\begin{split}
\xi(\{1\}^r,k_1+1,k_2+1)&=
I \left(\begin{xy}{
{(5,18)\ar@{o-}(0,14)},
{(0,-10)\ar @{{*}.}(0,-2)},
{(0,-2) \ar @{{*}-} (0,2)},
{(0,2) \ar @{{*}-o} (0,6)}, 
{(0,6) \ar @{.o} (0,14)}, 
{(10,2) \ar @{{*}-{*}} (10,6)}, 
{(10,6) \ar @{.{*}} (10,14)}, 
{(10,14) \ar @{-} (5,18)}, 
{(-1,6) \ar @/^1mm/ @{-} ^{k_1} (-1,14)}, 
{(11,2) \ar @/_1mm/ @{-} _{k_2} (11,14)}, 
{(-1,-10)\ar @/^1mm/@{-} ^{r} (-1,-2)},
}\end{xy} \right)\\
&=\frac{1}{r!k_1!k_2!}
\left(
\begin{xy}
{(0,10)\ar@{o-}(-12,-4)},
{(0,10)\ar@{-}(-4,-4)},
{(-12,-4)\ar@{.}(-4,-4)},
{(-12,-4)\ar@{o-}(-8,-8)},
{(-4,-4)\ar@{o-}(-8,-8)},
{(-8,-8)\ar@{{*}-{*}}(-12,-12)},
{(-8,-8)\ar@{-{*}}(-4,-12)},
{(-12,-12)\ar@{.}(-4,-12)},
{(0,10)\ar@{-{*}}(12,-4)},
{(0,10)\ar@{-{*}}(4,-4)},
{(4,-4)\ar@{.}(12,-4)},
{(-12,-13)\ar @/_1mm/@{}_{r}(-4,-13)},
{(-12,-3)\ar @/^1mm/@{-}^{k_1}(-4,-3)},
{(4,-5)\ar @/_1mm/@{-}_{k_2}(12,-5)},
\end{xy}
\right)\\
&=(r.h.s).\nonumber
\end{split}
\end{equation}
(2) can be proved by the same way as (1).
\end{proof}

Now we define generating functions of our alternating sums as follows.

\begin{df}\
\begin{enumerate}[label=$\mathrm{(\arabic*)}$]
\item For $k\geq0$, we define
\[
S_k\coloneqq \sum_{r+k_1+k_2=k}(-1)^{k_2}\xi(\{1\}^r,k_1+1;k_2+1).
\]
\item For $r\geq1$ and $k\geq0$, we define
\[
T_{r,k}=\sum_{k_1+\cdots+ k_{r+1}=k}(-1)^{k_{r+1}}\xi(k_1+1,\ldots,k_r+1;k_{r+1}+1).
\]
\item For $r\geq1$, we define
\begin{equation}
\begin{split}
F(x)&\coloneqq \sum_{k=1}^{\infty}S_kx^k,\\
G_r(x)&\coloneqq \sum_{k=1}^{\infty}T_{r,k}x^k,\nonumber
\end{split}
\end{equation}
where $|x|$ is sufficiently small.
\end{enumerate}
The convergence of $F(x)$ and $G_r(x)$ follow from the rough order estimations $S_k=O(3^k)~(k \rightarrow +\infty)$ and $T_{r,k}=O((r+2)^k)~(k \rightarrow +\infty)$.
\end{df}

By Proposition \ref{xi-int}, the functions $F(x)$ and $G_r(x)$ are immediately transformed into following integral expressions.

\begin{Lem}\label{xiG-int}\
\begin{equation}
\begin{split}
&F(x)=\int_{E_2}\left(\frac{1-u_2}{1-u_1}\right)^x\left(\frac{u_2}{u_1}\right)^x\frac{du_1du_2}{(1-u_1)u_2}.\\
&G_r(x)=\int_{E_{r+1}}\left(\frac{u_{r+1}}{u_1}\right)^x\left(1-u_{r+1}\right)^x\frac{du_1\cdots du_{r+1}}{(1-u_1)\cdots(1-u_r)u_{r+1}}.\nonumber
\end{split}
\end{equation}
\end{Lem}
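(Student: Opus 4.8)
The plan is to feed the iterated–integral representations of Proposition \ref{xi-int} into the definitions of $S_k$ and $T_{r,k}$, collect the resulting power series in $x$, interchange the sum with the integral over $E_2$ (resp.\ $E_{r+1}$), and recognise the multiple series that appears as a product of exponential series. For $F(x)$, substituting Proposition \ref{xi-int}(1) and writing $x^{k}=x^{r}x^{k_1}x^{k_2}$ on the locus $r+k_1+k_2=k$ gives
\[
F(x)=\sum_{r,k_1,k_2\ge 0}\frac{(-1)^{k_2}x^{\,r+k_1+k_2}}{r!\,k_1!\,k_2!}\int_{E_2}\Big(\log\tfrac1{1-u_1}\Big)^{r}\Big(\log\tfrac{u_2}{u_1}\Big)^{k_1}\Big(\log\tfrac1{1-u_2}\Big)^{k_2}\frac{du_1\,du_2}{(1-u_1)u_2}.
\]

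On $E_2$ one has $0<u_1<u_2<1$, so each of the three logarithms is nonnegative; hence the associated series of absolute values integrates (Tonelli) to $\int_{E_2}(1-u_1)^{-|x|}(u_2/u_1)^{|x|}(1-u_2)^{-|x|}\,\frac{du_1\,du_2}{(1-u_1)u_2}$, and a direct power count at the two singular loci $u_1\to 0$ and $(u_1,u_2)\to(1,1)$ shows this is finite once $|x|$ is small. By Fubini we may then exchange sum and integral, and summing the exponential series
\[
\sum_{r}\frac{(x\log\tfrac1{1-u_1})^{r}}{r!}=(1-u_1)^{-x},\qquad
\sum_{k_1}\frac{(x\log\tfrac{u_2}{u_1})^{k_1}}{k_1!}=\Big(\tfrac{u_2}{u_1}\Big)^{x},\qquad
\sum_{k_2}\frac{(-x\log\tfrac1{1-u_2})^{k_2}}{k_2!}=(1-u_2)^{x}
\]
yields $F(x)=\int_{E_2}(1-u_1)^{-x}(u_2/u_1)^{x}(1-u_2)^{x}\,\frac{du_1\,du_2}{(1-u_1)u_2}$, which is the stated identity after rewriting $(1-u_1)^{-x}(1-u_2)^{x}=\big(\tfrac{1-u_2}{1-u_1}\big)^{x}$. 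The computation for $G_r(x)$ is identical, starting from Proposition \ref{xi-int}(2): the sum over $k_1,\dots,k_r$ produces $\prod_{i=1}^{r}(u_{i+1}/u_i)^{x}$, which telescopes to $(u_{r+1}/u_1)^{x}$, and the sum over $k_{r+1}$ produces $(1-u_{r+1})^{x}$, giving the asserted integral over $E_{r+1}$.

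The step I expect to be the real work is the justification of the interchange, i.e.\ integrability over $E_r$ of the series of absolute values near the boundary; this is exactly where the standing assumption "$|x|$ sufficiently small" enters, and it is the analytic content behind the crude bounds $S_k=O(3^k)$ and $T_{r,k}=O((r+2)^k)$ recorded after the definition. One small bookkeeping point: the identities hold with the generating series read as $\sum_{k\ge 0}$, the $k=0$ terms contributing $S_0=\xi(1;1)=\zeta(2)$ and $T_{r,0}=\xi(\{1\}^{r};1)=\zeta(r+1)$, which match the values of the respective integrals at $x=0$.
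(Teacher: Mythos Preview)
Your argument is correct and is precisely the computation the paper has in mind when it says the lemma follows ``immediately'' from Proposition~\ref{xi-int}: substitute the integral formulas, split $x^k$ according to the exponents, swap sum and integral, and sum three (resp.\ $r+1$) independent exponential series. Your Tonelli/Fubini justification and the observation that the identities really match $\sum_{k\ge 0}$ rather than $\sum_{k\ge 1}$ (so that the constant term $\zeta(2)$, resp.\ $\xi(\{1\}^r;1)$, is accounted for) are both accurate and go slightly beyond what the paper spells out.
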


We can get our first results from these integral expressions. To calculate them, the following Proposition (\cite{CE}) plays an important role.

\begin{prop}[{\cite{CE}}]\label{2}
For $l\geq1$, we have
\[
\frac{\Gamma(l-x)\Gamma(l+x)}{\Gamma(l)^2}=\prod_{k\geq l}\left(1-\frac{x^2}{k^2}\right)^{-1}.
\]
\end{prop}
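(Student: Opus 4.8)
The plan is to reduce the identity to the classical case $l=1$ — which is nothing but Euler's product for the sine, equivalently the Weierstrass product for $1/\Gamma$ — and then bootstrap to arbitrary $l$ using only the functional equation $\Gamma(z+1)=z\Gamma(z)$.

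First I would record the base case $l=1$. From the Weierstrass product $\frac{1}{\Gamma(z)}=z\,e^{\gamma z}\prod_{n\geq1}\left(1+\frac{z}{n}\right)e^{-z/n}$, the factors $e^{\gamma x}$, $e^{-\gamma x}$ and $e^{\mp x/n}$ cancel between $\Gamma(1+x)$ and $\Gamma(1-x)$, giving
\[
\frac{\Gamma(1-x)\Gamma(1+x)}{\Gamma(1)^2}=\Gamma(1+x)\Gamma(1-x)=\prod_{n\geq1}\left(1-\frac{x^2}{n^2}\right)^{-1},
\]
which is exactly the assertion for $l=1$. (Equivalently one may quote Euler's reflection formula $\Gamma(1+x)\Gamma(1-x)=\pi x/\sin(\pi x)$ together with $\sin(\pi x)=\pi x\prod_{n\geq1}(1-x^2/n^2)$.)

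Next I would pass to general $l$. The cleanest route is to telescope in one step: writing $\Gamma(l\pm x)=\Gamma(1\pm x)\prod_{j=1}^{l-1}(j\pm x)$ and $\Gamma(l)=(l-1)!$, the left-hand ratio becomes $\prod_{j=1}^{l-1}\left(1-\frac{x^2}{j^2}\right)$ times the $l=1$ product, and the finite product cancels the first $l-1$ factors of the infinite one, leaving $\prod_{k\geq l}\left(1-\frac{x^2}{k^2}\right)^{-1}$. Alternatively one can induct on $l$: assuming the identity for $l$, apply $\Gamma(l+1\pm x)=(l\pm x)\Gamma(l\pm x)$ and $\Gamma(l+1)=l\,\Gamma(l)$ to obtain
\[
\frac{\Gamma(l+1-x)\Gamma(l+1+x)}{\Gamma(l+1)^2}=\frac{(l-x)(l+x)}{l^2}\cdot\frac{\Gamma(l-x)\Gamma(l+x)}{\Gamma(l)^2}=\left(1-\frac{x^2}{l^2}\right)\prod_{k\geq l}\left(1-\frac{x^2}{k^2}\right)^{-1}=\prod_{k\geq l+1}\left(1-\frac{x^2}{k^2}\right)^{-1}.
\]

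There is essentially no hard step: the only nontrivial input is the classical sine product (equivalently the Weierstrass product for $1/\Gamma$), and everything else is the $\Gamma$ functional equation. The one point worth a line of justification is convergence — since $\sum_{k\geq l}1/k^2<\infty$, the product $\prod_{k\geq l}(1-x^2/k^2)^{-1}$ converges locally uniformly for $x$ away from the integers $\geq l$, in particular for the small $|x|$ in which the generating functions $F(x)$ and $G_r(x)$ are considered, so all the rearrangements above are legitimate.
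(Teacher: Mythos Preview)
Your argument is correct. The paper does not actually prove this proposition: it is quoted from \cite{CE} and used as a black box, so there is no ``paper's own proof'' to compare against. Your reduction to the classical $l=1$ case via the Weierstrass/sine product, followed by telescoping with $\Gamma(z+1)=z\Gamma(z)$, is the standard and complete justification.
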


At the end of this section, we prove our first alternating sum formulas.

\begin{Thm}\label{main1}\
\begin{enumerate}[label=$\mathrm{(\arabic*)}$]
\item
For $k\geq 0$, we have
\[
S_k=
\left \{
\begin{array}{ll}
\zeta(k+2) &\five (k:\mathrm{even}),\\
0&\five(k:\mathrm{odd}).
\end{array}
\right.
\]

\item
For $r\geq1$ and $k\geq0$, we have
\[T_{r,k}=
\sum_{a+2b=k}(-1)^a
\binom{a+r-1}{r-1}
\zeta^{\star}(a+r+1,\{2\}^b) .
\]
\end{enumerate}
\end{Thm}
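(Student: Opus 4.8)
The plan is to evaluate the two integral expressions from Lemma \ref{xiG-int} directly and match them against the claimed closed forms, using Proposition \ref{2} as the engine for turning Beta-type integrals into products that can be re-expanded in powers of $x$.

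\textbf{Proof of (1).} First I would substitute $u_1 = u_2 t$ in the inner integral of $F(x)$. Since $0<u_1<u_2<1$, the variable $t$ ranges over $(0,1)$ and $du_1 = u_2\,dt$. The integrand becomes $\left(\frac{1-u_2}{1-u_2 t}\right)^x t^{-x} \frac{u_2\,dt\,du_2}{(1-u_2 t)u_2}$, so that
\[
F(x) = \int_0^1 (1-u_2)^x \left( \int_0^1 \frac{t^{-x}}{(1-u_2 t)^{x+1}}\,dt \right) du_2 .
\]
The inner integral in $t$ is a hypergeometric integral; expanding $(1-u_2 t)^{-x-1}$ as a binomial series and integrating term by term gives $\sum_{n\ge 0}\binom{-x-1}{n}(-u_2)^n \frac{1}{n+1-x}$. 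Plugging this back and integrating in $u_2$ against $(1-u_2)^x$ produces, after recognizing the Beta integrals $\int_0^1 u_2^n (1-u_2)^x\,du_2 = B(n+1,x+1)$, a series that I expect to telescope into $\sum_{n\ge 1}\frac{1}{n^2-x^2}\cdot(\text{something})$; the cleanest route is to instead massage $F(x)$ into the shape $\sum_{n\ge 1}\left(\frac{1}{n-x}-\frac{1}{n+x}\right)\frac{1}{2x}$-type sums, or more directly to show $F(x) = \sum_{n\ge 1}\frac{1}{n^2-x^2} = \frac{1}{2x}\left(\psi(1-x)-\psi(1+x)\right)$-flavored object. Once $F(x)=\sum_{n\ge1}\frac{1}{n^2-x^2}$ is established, expanding $\frac{1}{n^2-x^2}=\sum_{j\ge0}\frac{x^{2j}}{n^{2j+2}}$ and summing over $n$ gives $F(x)=\sum_{j\ge0}\zeta(2j+2)x^{2j}$, and reading off the coefficient of $x^k$ yields exactly $S_k=\zeta(k+2)$ for even $k$ and $0$ for odd $k$. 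The main subtlety here is justifying the interchange of the various sums and integrals for small $|x|$, which is fine by absolute convergence, and correctly bookkeeping the binomial expansions.

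\textbf{Proof of (2).} For $G_r(x)$ I would integrate out the middle variables $u_2,\ldots,u_r$ first, since the integrand depends on them only through $\frac{du_i}{1-u_i}$; the iterated integral $\int_{u_1<u_2<\cdots<u_r<u_{r+1}}\frac{du_2\cdots du_r}{(1-u_2)\cdots(1-u_r)}$ over that simplex equals $\frac{1}{(r-1)!}\left(\log\frac{1-u_1}{1-u_{r+1}}\right)^{r-1}$. This reduces $G_r(x)$ to a two-variable integral
\[
G_r(x) = \frac{1}{(r-1)!}\int_{E_2}\left(\frac{u_{r+1}}{u_1}\right)^x(1-u_{r+1})^x\left(\log\frac{1-u_1}{1-u_{r+1}}\right)^{r-1}\frac{du_1\,du_{r+1}}{(1-u_1)u_{r+1}},
\]
renaming $u_1,u_{r+1}$. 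Now the $\log$ factor can be produced by differentiating $\left(\frac{1-u_1}{1-u_{r+1}}\right)^{y}$ in $y$, $(r-1)$ times, at $y=0$; i.e.\ $G_r(x) = \frac{1}{(r-1)!}\left.\partial_y^{r-1}\right|_{y=0} H(x,y)$ where $H(x,y)=\int_{E_2}\left(\frac{u_2}{u_1}\right)^x(1-u_2)^x\left(\frac{1-u_1}{1-u_2}\right)^y\frac{du_1du_2}{(1-u_1)u_2}$. The integral $H(x,y)$ is again evaluated by the substitution $u_1=u_2 t$ and Beta integrals, and I expect it to come out as a ratio of Gamma functions to which Proposition \ref{2} applies, something like $\sum_{n\ge1}\frac{1}{(n-x)(n+x)}$ deformed by $y$, ultimately $H(x,y)=\sum_{n\ge1}\frac{1}{(n-x)(n-y)}\cdot\frac{1}{n}$ or a close variant whose $x^a y^{r-1}$ coefficients we can read off.

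\textbf{Matching the right-hand side.} The right-hand side $\sum_{a+2b=k}(-1)^a\binom{a+r-1}{r-1}\zeta^{\star}(a+r+1,\{2\}^b)$ should be recognized via the classical generating-function identity for zeta-star values with a string of $2$'s, namely $\sum_{b\ge0}\zeta^{\star}(m,\{2\}^b)x^{2b}$ equals an explicit product/ratio of sines or Gamma functions (this is essentially the Aoki--Ohno / Zagier-type evaluation), so that $\sum_{a,b}(-1)^a\binom{a+r-1}{r-1}\zeta^{\star}(a+r+1,\{2\}^b)x^{?}$ assembles into the same closed form I obtained for $G_r(x)$. Concretely I would show both sides equal $\sum_{n\ge1}\frac{1}{n}\left(\frac{1}{n^2-x^2}\right)\cdot(\text{appropriate }r\text{-dependent factor})$ after the $\partial_y^{r-1}$ is taken; the binomial coefficient $\binom{a+r-1}{r-1}$ is exactly what the $(r-1)$-fold differentiation of $\left(\frac{1-u_1}{1-u_2}\right)^y$ or of a geometric-type factor $\frac{1}{n-y}$ produces. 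The main obstacle, and the step I would spend the most care on, is pinning down the exact closed form of $H(x,y)$ (equivalently $G_r(x)$) so that it lines up coefficient-by-coefficient with the zeta-star side; once both are written as the same double series $\sum_{n\ge1}\sum_{j\ge0}$ of elementary terms, comparing coefficients of $x^k$ is routine. A cleaner alternative for (2), which I would try in parallel, is to prove the zeta-star side also has an $E_2$-iterated-integral representation (via Yamamoto's integral and the $\{2\}^b$ evaluation) and then exhibit a single change of variables on $E_2$ — essentially $u\mapsto 1-u$ type symmetry combined with the $u_1=u_2 t$ substitution — carrying the $G_r$ integrand to the zeta-star integrand, which would make the identity transparent without ever computing the common closed form.
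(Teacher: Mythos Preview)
Your argument for (1) is correct and is essentially the paper's proof: the substitution $u_1=u_2t$ is just a repackaging of the paper's direct binomial expansion of $(1-u_1)^{-x-1}$, and both routes land on $F(x)=\sum_{l\ge1}(l^2-x^2)^{-1}$, from which the claim follows by expanding in~$x$.

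For (2) there is a real gap. Your reduction of $G_r(x)$ to a two-variable integral with a factor $\bigl(\log\frac{1-u_1}{1-u_{r+1}}\bigr)^{r-1}$ is correct, but the plan of trading that logarithm for $\partial_y^{\,r-1}\big|_{y=0}H(x,y)$ does not produce the simple expression you guess. A direct computation gives
\[
H(x,y)=\sum_{l\ge1}\frac{1}{l-x}\cdot\frac{\Gamma(l-y)\,\Gamma(x-y+1)}{\Gamma(1-y)\,\Gamma(l+x-y+1)},
\]
which is \emph{not} of the shape $\sum_n\frac{1}{n(n-x)(n-y)}$; the variable $y$ sits inside three separate Gamma factors, so extracting the coefficient of $y^{r-1}$ and matching it against the zeta-star side is far from routine. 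The binomial $\binom{a+r-1}{r-1}$ you want does not fall out of this $H$ in any clean way.

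The paper's proof bypasses this by performing the involution $(u_1,\dots,u_{r+1})\mapsto(1-u_{r+1},\dots,1-u_1)$ \emph{first}, which converts each middle form $\tfrac{du_i}{1-u_i}$ into $\tfrac{du_i}{u_i}$. After expanding $(1-u_1)^{x-1}=\sum_{l\ge1}\frac{\Gamma(l-x)}{\Gamma(l)\Gamma(1-x)}u_1^{\,l-1}$, the successive integrals $\int_0^{u_{i+1}} u_i^{\,l+x-1}\,du_i$ each contribute a clean factor $(l+x)^{-1}$, yielding
\[
G_r(x)=\sum_{l\ge1}\frac{1}{l}\,\frac{\Gamma(l-x)\Gamma(l+x)}{\Gamma(l)^2}\,\frac{1}{(l+x)^r}.
\]
Proposition~\ref{2} then turns $\frac{\Gamma(l-x)\Gamma(l+x)}{\Gamma(l)^2}$ into $\prod_{k\ge l}(1-x^2/k^2)^{-1}$, whose expansion in $x^2$ gives the $\zeta^\star(\,\cdot\,,\{2\}^b)$ tail, while the negative-binomial expansion $(l+x)^{-r}=l^{-r}\sum_{a\ge0}\binom{a+r-1}{r-1}(-x/l)^a$ supplies the binomial coefficient. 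The ``$u\mapsto1-u$ type symmetry'' you mention in your closing paragraph is precisely this missing move; had you applied it to your two-variable integral (i.e.\ $(u,v)\mapsto(1-v,1-u)$ on $E_2$) \emph{before} the $u=vt$ substitution, your approach would have converged to the paper's.
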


\begin{proof}
(1) From Lemma \ref{xiG-int} and the generalized binomial theorem,
\begin{equation}
\begin{split}
F(x)&=
\sum_{l=1}^{\infty}\frac{\Gamma(l+x)}{\Gamma(l)\Gamma(1+x)}\int_{E_2}u_1^{l-x-1}(1-u_2)^x{u_2}^x\frac{du_1du_2}{u_2}\\
&=
\sum_{l=1}^{\infty}\frac{\Gamma(l+x)}{\Gamma(l)\Gamma(1+x)}\frac{1}{l-x}\int_{0}^{1}{u_2}^{l-1}(1-u_2)^xdu_2\\
&=
\sum_{l=1}^{\infty}\frac{\Gamma(l+x)}{\Gamma(l)\Gamma(1+x)}\frac{1}{l-x}B(l,x+1)\\
&=
\sum_{l=1}^{\infty}\frac{\Gamma(l+x)}{\Gamma(l)\Gamma(1+x)}\frac{1}{l-x}\frac{\Gamma(l)\Gamma(x+1)}{\Gamma(x+l+1)}\\
&=
\sum_{l=1}^{\infty}\left(l^2-x^2\right)^{-1}\\
&=
\sum_{n=1}^{\infty}\zeta(2(n+1))x^{2n}.\nonumber
\end{split}
\end{equation}
By comparing the coefficients on both sides, we get the conclusion.\\

(2) With the change of variables $(u_1,u_2,\ldots,u_{r+1})\rightarrow(1-u_{r+1},1-u_r,\ldots,1-u_1)$,
\[
G_r(x)=\int_{E_{r+1}}\left(\frac{1-u_1}{1-u_{r+1}}\right)^x{u_1}^x\frac{du_1\cdots du_{r+1}}{(1-u_1)u_2\cdots u_{r+1}}.
\]
By almost the same method as (1),
\begin{equation}
\begin{split}
G_r(x)&=
\sum_{l=1}^{\infty}\frac{\Gamma(l-x)}{\Gamma(l)\Gamma(1-x)}\int_{E_{r+1}}\left(1-u_{r+1}\right)^{-x}{u_1}^{l+x-1}\frac{du_1\cdots du_{r+1}}{u_2\cdots u_{r+1}}\\
&=
\sum_{l=1}^{\infty}\frac{\Gamma(l-x)}{\Gamma(l)\Gamma(1-x)}\frac{1}{l+x}\int_{E_{r}}\left(1-u_{r+1}\right)^{-x}{u_2}^{l+x-1}\frac{du_2\cdots du_{r+1}}{u_3\cdots u_{r+1}}\\
&=\cdots\\
&=
\sum_{l=1}^{\infty}\frac{\Gamma(l-x)}{\Gamma(l)\Gamma(1-x)}\frac{1}{(l+x)^{r}}\int^{1}_{0}\left(1-u_{r+1}\right)^{-x}{u_{r+1}}^{l+x-1}du_{r+1}\\
&=\sum_{l=1}^{\infty}\frac{\Gamma(l-x)}{\Gamma(l)\Gamma(1-x)}\frac{1}{(l+x)^{r}}B(1-x,l+x)\\
&=
\sum_{l=1}^{\infty}\frac{\Gamma(l-x)}{\Gamma(l)\Gamma(1-x)}\frac{1}{(l+x)^{r}}\frac{\Gamma(1-x)\Gamma(l+x)}{\Gamma(l+1)}\\
&=
\sum_{l=1}^{\infty}\frac{1}{l}\frac{\Gamma(l-x)\Gamma(l+x)}{\Gamma(l)^2}\frac{1}{(l+x)^r}.\nonumber
\end{split}
\end{equation}
Here, we use Proposition \ref{2} and get
\begin{equation}
\begin{split}
\sum_{l=1}^{\infty}\frac{1}{l}\frac{\Gamma(l-x)\Gamma(l+x)}{\Gamma(l)^2}&\frac{1}{(l+x)^r}\\
&=
\sum_{l=1}^{\infty}\frac{1}{l^{r+1}}\frac{\Gamma(l-x)\Gamma(l+x)}{\Gamma(l)^2}\left(\sum_{m=0}^{\infty}\frac{\Gamma(m+r)}{\Gamma(r)\Gamma(m+1)}\frac{(-x)^{m}}{l^m}\right)\\
&=\sum_{m=0}^{\infty}(-1)^m\binom{m+r-1}{r-1}\sum_{l=1}^{\infty}\frac{x^m}{l^{m+r+1}}\prod_{k\geq l}\left(1-\frac{x^2}{k^2}\right)^{-1}\\
&=\sum_{n=0}^{\infty}\left\{ \sum_{a+2b=n}(-1)^a\binom{a+r-1}{r-1}\cdot \zeta^{\star}(a+r+1,\{2\}^b)\right\}x^n. \nonumber
\end{split}
\end{equation}
By comparing the coefficients on both sides, we get the conclusion.
\end{proof}

\section{Alternating sums for the interpolated multiple zeta values}
In this section, we use a similar way in the previous section to get a generating function of ``height 1" t-MZVs. And from the expression, we compute our second alternating sum.

\begin{df}[{\cite{Y1}}]
For $\mathbf{k}=(k_1,\ldots,k_r):admissible$, we define
\begin{equation}
\zeta^t(k_1,\ldots,k_r)\coloneqq\sum_{\mathbf{k}'}t^{\sigma(\mathbf{k}')}\zeta(\mathbf{k}'),\nonumber
\end{equation}
where $\sigma(\mathbf{k}')\coloneqq r-{\mathrm{dep}}(\mathbf{k}')$, and the sum runs over all indices of the form $(k_1\Box k_2\Box\ldots \Box k_r)$  in which each  $\Box$ is filled by $``,"$ or $``+"$.
\end{df}

This function is called the ``interpolated multiple zeta value (t-MZV)''.
If we put $t=0$ (resp. $t=1$), this value coincides with the multiple zeta (resp.~zeta-star) value.

\begin{Remark}
In \cite{CE}, the generating function of $\mathrm{t}$-$\mathrm{MZVs}$ of fixed weight, depth and height has been constructed. In this paper, we treat a specialized case.
\end{Remark}

First, we prepare a Yamamoto's integral expression of height 1 t-MZVs.

\begin{prop}\label{tMZV-yint}
For $\mathbf{k}=(k_1,\ldots,k_r):admissible$, we have
\[
\zeta^t(\mathbf{k})
=I\left(
\begin{xy}
{(-28,-14)\ar@{{*}-}(-24,-12)},
{(-24,-12)\ar@{o.}(-16,-8)},
{(-16,-8)\ar@{o-}(-12,-6)},
{(-12,-6)\ar@{-}(-8,-4)},
(-12,-6)*{\odot},
{(-8,-4)\ar@{o.}(0,0)},
{(0,0)\ar@{o-}(4,2)},
(4,2)*{\odot},
{(4,2)\ar@{.}(12,6)},
(12,6)*{\odot},
{(12,6)\ar@{-}(16,8)},
{(16,8)\ar@{o.o}(24,12)},
{(-28,-15)\ar@/_2mm/@{-}_{k_1}(-16,-9)},
{(-12,-7)\ar@/_2mm/@{-}_{k_2}(0,-1)},
{(12,5)\ar@/_2mm/@{-}_{k_r}(24,11)}
\end{xy}
\right),
\]
where $\odot=(\frac{1}{1-u}+\frac{t}{u})du$.
\end{prop}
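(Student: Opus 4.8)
The plan is to reduce the statement to the known iterated-integral representation of ordinary multiple zeta values and then expand the ``$\odot$'' nodes by the binomial-type identity $\frac{1}{1-u}+\frac{t}{u}=\frac{1}{1-u}$ weighted combinatorially. First I would recall the standard iterated integral (Zagier/Kontsevich) representation: for admissible $\mathbf{k}=(k_1,\ldots,k_r)$,
\[
\zeta(\mathbf{k})=\int_{\Delta}\prod_{j=1}^{k_1-1}\frac{du}{u}\cdot\frac{du}{1-u}\cdots\prod_{j=1}^{k_r-1}\frac{du}{u}\cdot\frac{du}{1-u},
\]
integrated over the simplex $0<u_1<\cdots<u_{|\mathbf{k}|}<1$. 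In Yamamoto's graphical notation this is exactly the right-hand side of the claimed formula \emph{with every $\odot$ replaced by $\frac{du}{1-u}$} and each solid-circle run of length $k_i-1$ carrying $\frac{du}{u}$. So the content of the proposition is purely combinatorial: replacing the $r$ distinguished ``$\frac{1}{1-u}$'' forms (the ones that sit at the start of each block $k_2,\ldots,k_r$, together with the one implicitly at the start of block $k_1$ — here one must be careful which nodes get upgraded to $\odot$) by $\frac{1}{1-u}+\frac{t}{u}$ and multiplying out reproduces $\sum_{\mathbf{k}'}t^{\sigma(\mathbf{k}')}\zeta(\mathbf{k}')$.

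The key steps, in order, are: (i) write out $I$ of the displayed graph by expanding each $\odot=\bigl(\frac{1}{1-u}+\frac{t}{u}\bigr)du$, obtaining a sum over subsets $J$ of the $r$ positions carrying $\odot$ of $t^{|J|}$ times the integral in which the positions in $J$ use $\frac{du}{u}$ and the rest use $\frac{du}{1-u}$; (ii) observe that using $\frac{du}{u}$ at the junction between block $k_{i}$ and block $k_{i+1}$ has the effect, in the word describing the iterated integral, of merging the $\frac{du}{1-u}$ that ended block $k_i$ with the $\frac{du}{u}$'s at the start of block $k_{i+1}$, i.e. it literally performs the substitution $\Box\mapsto``+"$ of the definition of $\zeta^t$; (iii) conclude that the integral attached to a subset $J$ equals $\zeta(\mathbf{k}')$ where $\mathbf{k}'$ is obtained from $\mathbf{k}$ by filling the $\Box$'s indexed by $J$ with $+$ and the others with $,$ and that $|J|=r-\mathrm{dep}(\mathbf{k}')=\sigma(\mathbf{k}')$; (iv) sum over $J$ to recover $\sum_{\mathbf{k}'}t^{\sigma(\mathbf{k}')}\zeta(\mathbf{k}')=\zeta^t(\mathbf{k})$. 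The admissibility of $\mathbf{k}$ guarantees every $\mathbf{k}'$ appearing is admissible, so all the integrals converge.

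The main obstacle is bookkeeping rather than anything deep: one has to match Yamamoto's graphical conventions precisely — in particular to verify that the $\odot$'s in the picture sit at exactly the $r$ nodes whose upgrade corresponds bijectively to the $r-1$ separators $\Box$ plus the correct boundary behaviour, and that a ``$\frac{du}{u}$ at a separator'' really does implement concatenation of consecutive blocks in the iterated-integral word. I would handle this by first doing the depth-$2$ case $\zeta^t(k_1,k_2)$ by hand, checking it against $\zeta(k_1,k_2)+t\,\zeta(k_1+k_2)$, and then writing the general case as an induction on $r$ (or directly, by the subset expansion above), citing Theorem \ref{xi-yint}-style manipulations and the definition of $I$ from \cite{Y2} for the elementary rewriting rules. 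A secondary, minor point to address is convergence/interchange of the finite sum with the integral, which is immediate since the expansion of the $\odot$'s is a finite sum.
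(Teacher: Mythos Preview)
Your approach is correct and is exactly what the paper means by its one-line proof (``It is obvious from the definition''): expand each $\odot$ linearly, identify each choice of $\frac{t\,du}{u}$ at a junction with the contraction ``$+$'' and each choice of $\frac{du}{1-u}$ with ``$,$'', and recognise the resulting iterated integrals as the $\zeta(\mathbf{k}')$ appearing in the definition of $\zeta^t$.

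One small bookkeeping point to settle the uncertainty you flagged: in the displayed diagram there are exactly $r-1$ nodes marked $\odot$, namely the leading node of each of the blocks $k_2,\ldots,k_r$; the leading node of the $k_1$-block is the ordinary $\bullet=\frac{du}{1-u}$ and is \emph{not} upgraded. Thus the expansion has $2^{r-1}$ terms, in bijection with the $2^{r-1}$ fillings of the $r-1$ boxes $\Box$, and the exponent of $t$ on a term is precisely the number of ``$+$'' choices, i.e.\ $\sigma(\mathbf{k}')$. With that correction your steps (i)--(iv) go through verbatim.
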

\begin{proof}
It is obvious from the definition. 
\end{proof}

From this, we get an iterated integral expression of height 1 t-MZVs and its generating function.

\begin{Lem}\label{tMZV-int}
For $p,q\geq0$ and $t\in \mathbb{R}$, we have
\[
\zeta^t(\{1\}^p,q+2)=\frac{1}{p!q!}\int_{E_2}\left(\log\frac{1-u_1}{1-u_2}+t\log\frac{u_2}{u_1}\right)^p\left(\log\frac{1}{u_2}\right)^{q}~\frac{du_1du_2}{(1-u_1)u_2}.
\]
\end{Lem}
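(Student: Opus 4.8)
The plan is to start from the Yamamoto integral expression for $\zeta^t(\mathbf{k})$ given in Proposition \ref{tMZV-yint} and specialize it to indices of the shape $\mathbf{k}=(\{1\}^p,q+2)$. In the diagram for such an index, the ``top'' segment consists of $q+1$ letters: a string of $q$ letters of the form $\frac{du}{u}$ coming from the exponent on $u_2$, topped by the admissibility letter, which in the height-$1$ specialization contributes $\frac{du_2}{u_2}$ as well, so that the relevant differential for the second variable is $\frac{1}{q!}\bigl(\log\frac{1}{u_2}\bigr)^{q}\frac{du_2}{u_2}$. The ``bottom'' portion corresponding to the $p$ ones is a string of $p$ copies of $\odot=\bigl(\frac{1}{1-u}+\frac{t}{u}\bigr)du$, and iterating such a $1$-form $p$ times over the simplex produces the factor $\frac{1}{p!}\bigl(\log\frac{1-u_1}{1-u_2}+t\log\frac{u_2}{u_1}\bigr)^{p}\frac{du_1}{1-u_1}$ — here $\int \bigl(\frac{1}{1-u}+\frac{t}{u}\bigr)du$ between $u_1$ and $u_2$ evaluates to $\log\frac{1-u_1}{1-u_2}+t\log\frac{u_2}{u_1}$, and the outermost copy supplies the $\frac{du_1}{1-u_1}$.

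Concretely, I would first recall that Yamamoto's integral $I(\,\cdot\,)$ for a labeled poset that is a single chain reduces to the usual iterated integral over the simplex $E_{r}$ (here $E_2$ after collapsing the two blocks), so the whole integral factors as an integral over $0<u_1<u_2<1$ of the product of the two ``block contributions'' described above, against $\frac{du_1}{1-u_1}\cdot\frac{du_2}{u_2}$. Then I would make precise the two elementary antiderivative computations: (i) that the $m$-fold iterated integral of $\odot$ from the fixed lower endpoint $u_1$ up to $u_2$ equals $\frac{1}{m!}\bigl(\log\frac{1-u_1}{1-u_2}+t\log\frac{u_2}{u_1}\bigr)^{m}$, which follows by induction since $\odot$ has the single antiderivative $-\log(1-u)+t\log u$ and iterated integrals of a $1$-form $\omega=d\phi$ give $\frac{(\phi(\text{top})-\phi(\text{bot}))^{m}}{m!}$; and (ii) that the $q$-fold iterated integral of $\frac{du}{u}$ from $u_2$ up to $1$ — equivalently, after the reflection built into the picture, that the string of $q$ letters $\frac{du}{u}$ attached below the apex contributes $\frac{1}{q!}\bigl(\log\frac{1}{u_2}\bigr)^{q}$. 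Combining the two blocks and reinstating the boundary letters $\frac{du_1}{1-u_1}$ and $\frac{du_2}{u_2}$ gives exactly the claimed formula.

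A cleaner alternative, which I would actually prefer to present, is to bypass Yamamoto's integral and expand directly from the definition of $\zeta^t$. Write $\zeta^t(\{1\}^p,q+2)=\sum_{\mathbf{k}'}t^{\sigma(\mathbf{k}')}\zeta(\mathbf{k}')$ where the sum is over all ways of merging adjacent entries; since the last entry $q+2$ is always admissible, each resulting index still has ``height $1$'' (it is of the form $(\{1\}^{p-i}, \text{something}+2)$ with some of the trailing ones absorbed) and has a standard iterated-integral representation with differential forms $\omega_0=\frac{du}{u}$ and $\omega_1=\frac{du}{1-u}$. Summing the iterated-integral representatives of all the merged indices, weighted by $t^{\sigma}$, telescopes into a single integral over $E_2$ in which the $p$ ``one'' slots each get replaced by the combined form $\omega_1 + t\,\omega_0$; carrying out the two nested integrations (the inner one over the block of $\omega_0$'s below the apex, the outer one over the block of $(\omega_1+t\omega_0)$'s) yields the logarithmic powers $\bigl(\log\frac{1}{u_2}\bigr)^q$ and $\bigl(\log\frac{1-u_1}{1-u_2}+t\log\frac{u_2}{u_1}\bigr)^p$ with the factorials $1/(p!\,q!)$.

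The main obstacle is purely bookkeeping: correctly identifying which boundary letters ($\frac{du_1}{1-u_1}$ at the bottom, $\frac{du_2}{u_2}$ at the apex) are ``used up'' versus ``counted in the exponent,'' so that the factor $\frac{1}{p!\,q!}$ and the integrand $\frac{du_1\,du_2}{(1-u_1)u_2}$ come out exactly as stated and no variable is double-counted. Once the dictionary between the picture in Proposition \ref{tMZV-yint} and the iterated integral over $E_2$ is set up carefully — in particular the orientation/reflection convention relating $\log\frac{1}{u_2}$ to the string of $\frac{du}{u}$'s — the rest is the two routine induction computations of iterated integrals of a single $1$-form, which I would only sketch.
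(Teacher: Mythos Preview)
Your proposal is correct and follows essentially the same route as the paper: specialize Proposition~\ref{tMZV-yint} to $\mathbf{k}=(\{1\}^p,q+2)$, then collapse the chain of $p$ copies of $\odot$ and the chain of $q$ copies of $\frac{du}{u}$ into the two logarithmic powers with the factor $\frac{1}{p!\,q!}$, leaving the boundary letters $\frac{du_1}{1-u_1}$ and $\frac{du_2}{u_2}$. The paper carries this out via a Yamamoto poset rewriting (chain $\to$ antichain with symmetry factor) and then writes the resulting integral as $\frac{1}{p!q!}\int_{E_2}\bigl(\int_{u_1}^{u_2}(\frac{1}{1-u}+\frac{t}{u})\,du\bigr)^p\bigl(\int_{u_2}^{1}\frac{du}{u}\bigr)^q\frac{du_1\,du_2}{(1-u_1)u_2}$, which is exactly your computation (i)--(ii); your alternative direct expansion of $\zeta^t$ is not used in the paper but would also work.
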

\begin{proof}
From Proposition \ref{tMZV-yint},
\begin{equation}
\begin{split}
\zeta^t(\{1\}^p,q+2)&=I
\left(
\begin{xy}
{(0,-12)\ar@{{*}-}(0,-8)},
(0,-8)*{\odot},
{(0,-8)\ar@{.}(0,0)},
(0,0)*{\odot},
{(0,0)\ar@{-}(0,4)},
{(0,4)\ar@{o-}(0,8)},
{(0,8)\ar@{o.o}(0,16)},
{(1,-8)\ar@/_1mm/@{-}_p(1,0)},
{(1,8)\ar@/_1mm/@{-}_q(1,16)}
\end{xy}
\right)\\
&=
\frac{1}{p!q!}I\left(
\begin{xy}
(0,10)*{},
(0,-10)*{},
{(0,-10)\ar@{{*}-}(-6,-2)},
{(0,-10)\ar@{-}(6,-2)},
(-6,-2)*{\odot},
(6,-2)*{\odot},
{(6,-2)\ar@{-o}(0,2)},
{(-6,-2)\ar@{-}(0,2)},
{(0,2)\ar@{-o}(-6,6)},
{(0,2)\ar@{-o}(6,6)},
{(-6,-2)\ar@{.}(6,-2)},
{(-6,6)\ar@{.}(6,6)},
{(-6,-3)\ar@/_1mm/@{-}_{p}(6,-3)},
{(-6,7)\ar@/^1mm/@{-}^{q}(6,7)}
\end{xy}
\right)\\
&=
\frac{1}{p!q!}\int_{E_2}\left(\int_{u_1}^{u_2} \frac{1}{1-u}+\frac{t}{u}du\right)^p\left(\int_{u_2}^{1}\frac{1}{u}du\right)^{q}\cdot \frac{du_1du_2}{(1-u_1)u_2}\\
&=(r.h.s).\nonumber
\end{split}
\end{equation}
\end{proof}

\begin{Lem}\label{tMZVG-int}
For $t\in \mathbb{R}$, $|tx|<1$, $|y|<1$ and $|x+y|<1$, we have
\begin{equation}
\begin{split}
\sum_{p,q\geq0}\zeta^t(\{1\}^p,q+2)x^py^q&=
\int_{E_2}\left(\frac{1-u_1}{1-u_2}\right)^x\left(\frac{u_2}{u_1}\right)^{tx}\left(\frac{1}{u_2}\right)^y\frac{du_1du_2}{(1-u_1)u_2}\\
&=\sum_{l=1}^{\infty}\frac{\Gamma(l-x)\Gamma(l-y)}{\Gamma(l)\Gamma(l-x-y)}~\frac{1}{(l-x-y)(l-tx)}.\nonumber
\end{split}
\end{equation}
\end{Lem}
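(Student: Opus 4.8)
The plan is to prove the two equalities in turn. The first equality is essentially a bookkeeping exercise: starting from the integral formula for $\zeta^t(\{1\}^p,q+2)$ in Lemma \ref{tMZV-int}, I would multiply by $x^p y^q$, sum over $p,q\geq 0$, and interchange summation with the integral over $E_2$ (justified by absolute convergence for $|tx|<1$, $|y|<1$, $|x+y|<1$, using the crude bound on the logarithmic factors on the compact region $E_2$ together with $e^{|x|\log\frac{1}{1-u_1}}$-type majorants near the endpoints). The factor $\frac{1}{p!}(\log\frac{1-u_1}{1-u_2}+t\log\frac{u_2}{u_1})^p x^p$ sums to $\exp\!\big(x\log\frac{1-u_1}{1-u_2}+tx\log\frac{u_2}{u_1}\big)=\big(\frac{1-u_1}{1-u_2}\big)^x\big(\frac{u_2}{u_1}\big)^{tx}$, and similarly $\sum_q \frac{1}{q!}(\log\frac1{u_2})^q y^q=\big(\frac1{u_2}\big)^y$, which gives exactly the middle expression.

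For the second equality I would imitate the computation in the proof of Theorem \ref{main1}. First expand $\big(\frac{1-u_1}{1-u_2}\big)^x u_1^{-tx}$ in the variable $u_1$: write $(1-u_1)^{-x}$ via the generalized binomial theorem as $\sum_{l\ge 1}\frac{\Gamma(l-x)}{\Gamma(l)\Gamma(1-x)}u_1^{l-1}$, so that the $u_1$-integrand becomes $\sum_{l\ge1}\frac{\Gamma(l-x)}{\Gamma(l)\Gamma(1-x)}u_1^{l-tx-2}$ times factors depending only on $u_2$. Integrating $u_1$ from $0$ to $u_2$ produces $\frac{1}{l-tx}u_2^{l-tx-1}$ (here $|tx|<1$ guarantees $l-tx>0$, so no boundary issue at $0$). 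This leaves a single integral
\[
\sum_{l\ge1}\frac{\Gamma(l-x)}{\Gamma(l)\Gamma(1-x)}\frac{1}{l-tx}\int_0^1 u_2^{l-tx-1}(1-u_2)^x(1-u_2)^{-x}\,\frac{du_2}{u_2}\cdot(\text{correcting the }u_2\text{ powers}),
\]
which I would organize carefully: the remaining $u_2$-dependence is $(1-u_2)^{-x}\,u_2^{-y}\,\frac{1}{u_2}$ from the original integrand together with the $u_2^{l-tx-1}$ just produced, i.e. $u_2^{l-tx-y-2}(1-u_2)^{-x}$, whose integral over $[0,1]$ is the beta integral $B(l-tx-y-1,1-x)=\frac{\Gamma(l-tx-y-1)\Gamma(1-x)}{\Gamma(l-tx-y)}$. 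Multiplying through and simplifying the gamma factors yields the claimed series $\sum_{l\ge1}\frac{\Gamma(l-x)\Gamma(l-y)}{\Gamma(l)\Gamma(l-x-y)}\frac{1}{(l-x-y)(l-tx)}$ after renaming; I would double-check the arithmetic of the arguments so that the two $\Gamma$'s in the numerator come out as $\Gamma(l-x)$ and $\Gamma(l-y)$ and the denominator as $\Gamma(l)\Gamma(l-x-y)$.

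The main obstacle I anticipate is not any single hard idea but rather keeping the exponents of $u_1$, $u_2$ and $(1-u_2)$ straight through the successive manipulations, and verifying that all the convergence/interchange steps are legitimate in the stated polydisc $|tx|<1$, $|y|<1$, $|x+y|<1$ — in particular that every beta integral that appears has both arguments with positive real part there, and that the final series converges (which follows since $\frac{\Gamma(l-x)\Gamma(l-y)}{\Gamma(l)\Gamma(l-x-y)}=1+O(1/l)$ so the general term is $O(l^{-2})$). Once the exponent bookkeeping is done correctly, the gamma-function simplification is routine.
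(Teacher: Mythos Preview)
Your approach is exactly the paper's: the first equality follows from Lemma~\ref{tMZV-int} by summing the exponential series, and the second by expanding $(1-u_1)^{x-1}$ (you wrote $(1-u_1)^{-x}$, but the series you give is the expansion of $(1-u_1)^{x-1}$, which is what is needed since the measure contributes the extra $(1-u_1)^{-1}$) via the binomial theorem, integrating out $u_1$, and recognizing a beta integral in $u_2$.

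That said, your exponent bookkeeping has a slip that would block the final simplification rather than merely delay it. You dropped the factor $u_2^{tx}$ coming from $(u_2/u_1)^{tx}$: the $u_2$-dependence of the original integrand is $(1-u_2)^{-x}u_2^{\,tx-y-1}$, not $(1-u_2)^{-x}u_2^{-y-1}$. With this restored (and the $u_1$-exponent corrected to $l-tx-1$, so that $\int_0^{u_2}u_1^{\,l-tx-1}du_1=\tfrac{u_2^{\,l-tx}}{l-tx}$), the $tx$ in the exponent cancels and the remaining $u_2$-integral is
\[
\int_0^1 (1-u_2)^{-x}u_2^{\,l-y-1}\,du_2=B(l-y,1-x)=\frac{\Gamma(l-y)\Gamma(1-x)}{\Gamma(l+1-x-y)},
\]
which together with $\frac{\Gamma(l-x)}{\Gamma(l)\Gamma(1-x)}\cdot\frac{1}{l-tx}$ gives the stated sum immediately (using $\Gamma(l+1-x-y)=(l-x-y)\Gamma(l-x-y)$). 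Your version $B(l-tx-y-1,1-x)$ leaves a stray $tx$ inside the gamma arguments that no ``renaming'' will remove, so this is the one piece of arithmetic you genuinely must fix rather than just double-check.
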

\begin{proof}
The first equality is immediately concluded from Lemma \ref{tMZV-int}.
The second equality is proved similar to Theorem \ref{main1}:
\begin{equation}
\begin{split}
U(x,y)&=\int_{E_2}\left(\frac{1-u_1}{1-u_2}\right)^x\left(\frac{u_2}{u_1}\right)^{tx}\left(\frac{1}{u_2}\right)^y\frac{du_1du_2}{(1-u_1)u_2}\\
&=\sum_{l=1}^{\infty}\frac{\Gamma(l-x)}{\Gamma(1-x)\Gamma(l)}\int_{E_2}(1-u_2)^{-x}u_1^{l-tx-1}u_2^{tx-y}\frac{du_1du_2}{u_2}\\
&=
\sum_{l=1}^{\infty}\frac{\Gamma(l-x)}{\Gamma(1-x)\Gamma(l)}\frac{1}{l-tx}\int_{E_2}(1-u_2)^{-x}u_2^{l-y-1}du_2\\
&=\sum_{l=1}^{\infty}\frac{\Gamma(l-x)}{\Gamma(1-x)\Gamma(l)}\frac{1}{l-tx}B(1-x,l-y)\\
&=\sum_{l=1}^{\infty}\frac{\Gamma(l-x)}{\Gamma(1-x)\Gamma(l)}\frac{1}{l-tx}\frac{\Gamma(1-x)\Gamma(l-y)}{\Gamma(l+1-y-x)}\\
&=
\sum_{l=1}^{\infty}\frac{\Gamma(l-x)\Gamma(l-y)}{\Gamma(l)\Gamma(l-x-y)}~\frac{1}{(l-x-y)(l-tx)}.\nonumber
\end{split}
\end{equation}
\end{proof}

\begin{Remark}
In \cite{ZC}, the left hand side of the above Lemma was calculated as 
$\frac{1}{(1-y)(1-xt)}
\;_3F_2\left({1-y,1+(1-t)x,1\atop 2-y,2-xt};1\right)$, 
where $\;_3F_2$ is a hypergeometric function.
On the other hand, we can write the right hand side of above result as \\$\frac{\Gamma(1-x)\Gamma(1-y)}{\Gamma(2-x-y)(1-xt)}
\;_3F_2\left({1-x,1-y,1-tx\atop 2-x-y,2-xt};1\right).$
Lemma \ref{tMZVG-int} can also be obtained by applying a transformation formula for $\;_3F_2$ to the result in \cite{ZC}.
\end{Remark}

In \cite{Y1}, Yamamoto discussed the case of differentiating t-MZVs with respect to $t$. By using it, various sum formulas are obtained from one sum formula.

\begin{prop}[{\cite{Y1}}]\label{tMZV-diff}
For $\textbf{k}:admissible$, we have
\[
\frac{d}{dt}\zeta^t(\mathbf{k})=\sum_{\sigma(\mathbf{k}')=1}\zeta^t(\mathbf{k}'),
\]
where the sum runs over all indices of the form $(k_1\Box k_2\Box\cdots \Box k_r)$  in which one of $\Box$ is filled by $``+"$ and the others are filled by $``,"$.
\end{prop}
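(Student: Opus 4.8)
The plan is to argue directly from the combinatorial definition of $\zeta^t$: expand $\zeta^t(\mathbf{k})$ as a polynomial in $t$, differentiate termwise, and then re-index the result so as to recognise it as $\sum_{\sigma(\mathbf{k}')=1}\zeta^t(\mathbf{k}')$. Fix an admissible $\mathbf{k}=(k_1,\ldots,k_r)$ and index the fillings of the $r-1$ boxes by subsets $S\subseteq\{1,\ldots,r-1\}$, where $j\in S$ means the $j$-th box is filled by ``$+$''. Write $\mathbf{k}_S$ for the resulting contracted index; then $\mathrm{dep}(\mathbf{k}_S)=r-|S|$, hence $\sigma(\mathbf{k}_S)=|S|$, and since contraction only merges (i.e.\ \emph{adds}) consecutive components, $\mathbf{k}_S$ is again admissible, so each $\zeta(\mathbf{k}_S)$ converges. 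Thus the definition reads
\[
\zeta^t(\mathbf{k})=\sum_{S\subseteq\{1,\ldots,r-1\}}t^{|S|}\,\zeta(\mathbf{k}_S),
\]
a genuine polynomial in $t$, and therefore
\[
\frac{d}{dt}\zeta^t(\mathbf{k})=\sum_{\emptyset\neq S\subseteq\{1,\ldots,r-1\}}|S|\,t^{|S|-1}\,\zeta(\mathbf{k}_S).
\]

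For the right-hand side, the indices $\mathbf{k}'$ with $\sigma(\mathbf{k}')=1$ are exactly $\mathbf{k}_{\{j\}}$ for $j\in\{1,\ldots,r-1\}$, each of depth $r-1$ with boxes naturally labelled by $\{1,\ldots,r-1\}\setminus\{j\}$. Applying the displayed expansion to $\mathbf{k}_{\{j\}}$ gives
\[
\zeta^t(\mathbf{k}_{\{j\}})=\sum_{S'\subseteq\{1,\ldots,r-1\}\setminus\{j\}}t^{|S'|}\,\zeta\big(\mathbf{k}_{S'\cup\{j\}}\big).
\]
Summing over $j$ and putting $S=S'\cup\{j\}$, a nonempty $S$ is produced exactly by the $|S|$ pairs $(j,S\setminus\{j\})$ with $j\in S$, and $|S'|=|S|-1$; hence $\sum_{j=1}^{r-1}\zeta^t(\mathbf{k}_{\{j\}})=\sum_{\emptyset\neq S}|S|\,t^{|S|-1}\zeta(\mathbf{k}_S)$, which is exactly $\frac{d}{dt}\zeta^t(\mathbf{k})$.

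The whole argument is bookkeeping, so the one point that needs care is the bijection between ``box-fillings of $\mathbf{k}$ placing a $+$ in box $j$'' and ``box-fillings of the contracted index $\mathbf{k}_{\{j\}}$'', together with the multiplicity count that turns $\sum_j$ into the factor $|S|$. As an alternative one could differentiate Yamamoto's integral expression of Proposition \ref{tMZV-yint}: $I(\cdot)$ is multilinear in the edge one-forms $\odot=\big(\tfrac{1}{1-u}+\tfrac{t}{u}\big)\,du$, so by the Leibniz rule $\tfrac{d}{dt}$ replaces one $\odot$ at a time by $\tfrac{1}{u}\,du$, and each such replacement is precisely the integral realisation of inserting a single ``$+$'' into $\mathbf{k}$; summing over the chosen edge reproduces $\sum_{\sigma(\mathbf{k}')=1}\zeta^t(\mathbf{k}')$, with convergence handled automatically by the integral representation.
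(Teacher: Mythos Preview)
Your argument is correct. The paper does not actually supply a proof of this proposition: it is quoted from \cite{Y1} and used as a black box, so there is no ``paper's own proof'' to compare against. Your combinatorial bookkeeping via subsets $S\subseteq\{1,\dots,r-1\}$ is exactly the natural way to establish the identity from the definition of $\zeta^t$, and the one nontrivial step --- that further contractions of $\mathbf{k}_{\{j\}}$ are in bijection with contractions of $\mathbf{k}$ at sets containing $j$, yielding the multiplicity $|S|$ --- is handled cleanly. The alternative sketch via differentiating the Yamamoto integral of Proposition~\ref{tMZV-yint} is also valid and in the spirit of how the paper treats the analogous Proposition~\ref{tAK-MZV-diff}.
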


If we put $x=-y$ in Proposition \ref{tMZVG-int}, the left hand side becomes the generating function of alternating sums for ``height 1'' t-MZVs and we get our second sum formula.

\begin{Thm}\label{main2}
For $n,l\geq0$ and $t\in \mathbb{R}$, we have
\begin{equation}
\begin{split}
\sum_{a+b=n}(-1)^b&\left(
\sum_{\substack{|\textbf{k}|=l\\ \textbf{dep}(\textbf{k})=a+1}}\zeta^t(k_1+1,\ldots,k_a+1,k_{a+1}+b+2)\right)\\
&=
\sum_{a+2b=n}t^a\binom{a+l}{l}\zeta^{\star}(a+l+2,\{2\}^b).\nonumber
\end{split}
\end{equation}
When $t=0$, we have
\begin{equation}
\begin{split}
\sum_{a+b=n}(-1)^b&\left(
\sum_{\substack{|\mathbf{k}|=l\\ \mathrm{dep}(\mathbf{k})=a+1}}\zeta(k_1+1,\ldots,k_a+1,k_{a+1}+b+2)\right)\\
&=\left\{
\begin{split}
&\zeta^{\star}(l+2,\{2\}^b)\five (n=2b)\\
&~~~~~~~0 ~~~~~~~~~~~~~(n=2b+1)
\end{split}
\right. .
\end{split}\nonumber
\end{equation}
When $l=0$, we have
\[
\sum_{a+b=n}(-1)^b\zeta^t(\{1\}^a,b+2)=\sum_{a+2b=n}t^a\zeta^{\star}(a+2,\{2\}^b).
\]
Therefore, this theorem contains an interpolation of the alternating sum formulas for height one multiple zeta and zeta-star values.
\end{Thm}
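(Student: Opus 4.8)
The plan is to prove the general identity by a two-variable generating function: alongside the variable $x$ that records the alternating parameter $n=a+b$ (playing the same role as in Section~2 and Lemma~\ref{tMZVG-int}), I introduce a second variable $z$ that records the extra weight $l=|\mathbf{k}|$. Set
\[
H(x,z):=\sum_{a,b\ge 0}(-1)^{b}x^{a+b}\sum_{\mathbf{k}}\zeta^{t}(k_1+1,\dots,k_a+1,k_{a+1}+b+2)\,z^{|\mathbf{k}|},
\]
where the inner sum runs over all $\mathbf{k}=(k_1,\dots,k_{a+1})$ with nonnegative entries; $H$ converges for $|x|,|z|$ small by a crude bound of the kind already quoted for $F$ and $G_r$. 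Grouping terms by $n=a+b$ and $l=|\mathbf{k}|$, the coefficient $[x^{n}z^{l}]\,H(x,z)$ is exactly the left-hand side of the theorem, so it suffices to evaluate $H$ and read off its Taylor coefficients.

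The first real step is to rewrite the inner sum over $\mathbf{k}$ as an iterated integral of fixed dimension. By Proposition~\ref{tMZV-yint}, $\zeta^{t}(k_1+1,\dots,k_a+1,k_{a+1}+b+2)$ is the Yamamoto integral of the word
\[
\omega_1\,\omega_0^{k_1}\,(\odot\,\omega_0^{k_2})\cdots(\odot\,\omega_0^{k_a})(\odot\,\omega_0^{k_{a+1}+b+1}),\qquad \omega_0=\tfrac{du}{u},\ \ \omega_1=\tfrac{du}{1-u},\ \ \odot=\tfrac{du}{1-u}+t\,\tfrac{du}{u},
\]
read with $\omega_1$ nearest $0$. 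Summing over the $k_i$ collapses each block $\omega_0^{k_i}$ lying between two surviving corners $v_i<v_{i+1}$ into the single factor $(v_{i+1}/v_i)^{z}$, since $\sum_{k\ge0}\tfrac{z^{k}}{k!}\bigl(\log\tfrac{v_{i+1}}{v_i}\bigr)^{k}=(v_{i+1}/v_i)^{z}$; the corners that survive are the leading $\omega_1$, the $a$ copies of $\odot$, and the last $b+1$ copies of $\omega_0$, for a total of $a+b+2$. After telescoping $\prod_i(v_{i+1}/v_i)^{z}=(v_{a+2}/v_1)^{z}$, I expect
\[
\sum_{\mathbf{k}}z^{|\mathbf{k}|}\zeta^{t}(k_1+1,\dots,k_{a+1}+b+2)=\int_{E_{a+b+2}}\Bigl(\tfrac{v_{a+2}}{v_1}\Bigr)^{z}\frac{dv_1}{1-v_1}\prod_{i=2}^{a+1}\Bigl(\tfrac{1}{1-v_i}+\tfrac{t}{v_i}\Bigr)dv_i\prod_{j=a+2}^{a+b+2}\frac{dv_j}{v_j}.
\]

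Feeding this into $H$, I would resum $\sum_{a\ge0}\tfrac{x^{a}}{a!}(\cdots)^{a}$ over the $\odot$-chain---a routine exponentiation, cf. the passage from Proposition~\ref{xi-int} to Lemma~\ref{xiG-int}---which turns the variables $v_2,\dots,v_{a+1}$ into a factor $\bigl(\tfrac{1-u_1}{1-u_2}\bigr)^{x}\bigl(\tfrac{u_2}{u_1}\bigr)^{tx}$, and then resum over $b$, which turns the trailing $\omega_0$-chain into $u_2^{x}$; this should leave the two-dimensional integral
\[
H(x,z)=\int_{E_2}\Bigl(\tfrac{1-u_1}{1-u_2}\Bigr)^{x}\Bigl(\tfrac{u_2}{u_1}\Bigr)^{z+tx}u_2^{x}\,\frac{du_1\,du_2}{(1-u_1)u_2}.
\]
This is evaluated verbatim by the computation in the proof of Lemma~\ref{tMZVG-int} (expand $(1-u_1)^{x-1}$ by the generalized binomial theorem, integrate out $u_1$, recognise a Beta integral in $u_2$), the only change being that the middle exponent is now $z+tx$ instead of $tx$; the outcome is $H(x,z)=\sum_{l\ge1}\tfrac{\Gamma(l-x)\Gamma(l+x)}{\Gamma(l)^{2}}\,\tfrac{1}{l(l-z-tx)}$. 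Proposition~\ref{2} then rewrites $\tfrac{\Gamma(l-x)\Gamma(l+x)}{\Gamma(l)^{2}}=\prod_{k\ge l}(1-x^{2}/k^{2})^{-1}$; expanding $\tfrac{1}{l-z-tx}=\sum_{m\ge0}(z+tx)^{m}l^{-m-1}$ and summing over $l$ exactly as in the proof of Theorem~\ref{main1}(2) gives $H(x,z)=\sum_{m,b\ge0}(z+tx)^{m}x^{2b}\,\zeta^{\star}(m+2,\{2\}^{b})$. Finally, expanding $(z+tx)^{m}=\sum_{i+j=m}\binom{i+j}{i}z^{i}t^{j}x^{j}$ and comparing the coefficient of $x^{n}z^{l}$ produces $\sum_{a+2b=n}t^{a}\binom{a+l}{l}\zeta^{\star}(a+l+2,\{2\}^{b})$, the right-hand side of the theorem; the two displayed corollaries then follow by setting $t=0$ (only $a=0$ survives) and $l=0$ (the binomial is $1$).

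The step I expect to be the main obstacle is the one producing the displayed $E_{a+b+2}$-integral: one must pin down exactly which letters of the Yamamoto word are ``corners'' that persist after the summation and which $\omega_0$-blocks get absorbed into the power factors $(v_{i+1}/v_i)^{z}$, check that these telescope correctly to $(v_{a+2}/v_1)^{z}$, and justify interchanging the infinitely many summations (over $k_1,\dots,k_{a+1}$, and then over $a$ and $b$) with the $(a+b+2)$-fold integration---which needs an absolute-convergence bound for $|x|,|z|$ small, of the same flavour as the order estimates already used for $F$ and $G_r$. Once this bookkeeping is secured, every later manipulation is a copy of the Beta-integral and Proposition~\ref{2} computations already carried out for Theorem~\ref{main1} and Lemma~\ref{tMZVG-int}.
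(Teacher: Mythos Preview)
Your argument is correct, but it is organised differently from the paper's. The paper proves only the case $l=0$ directly (by setting $y=-x$ in Lemma~\ref{tMZVG-int}, applying Proposition~\ref{2}, and comparing coefficients of $x^{n}$), and then derives the general $l$ by differentiating that one-variable identity $l$ times with respect to $t$ and invoking Proposition~\ref{tMZV-diff}, which identifies $\frac{d^{l}}{dt^{l}}\zeta^{t}(\{1\}^{a},b+2)$ with $l!$ times the inner sum over $\mathbf{k}$ with $|\mathbf{k}|=l$ and $\mathrm{dep}(\mathbf{k})=a+1$.

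Your route replaces this $t$-differentiation by the introduction of a second generating variable $z$: your shift $tx\mapsto tx+z$ in the exponent of $(u_{2}/u_{1})$ is exactly the Taylor-series counterpart of the paper's repeated $\partial/\partial t$, and your combinatorial ``corner'' bookkeeping in the Yamamoto word effectively re-proves the iterated form of Proposition~\ref{tMZV-diff} at the level of integrals. Once you reach $H(x,z)=\sum_{l\ge1}\frac{\Gamma(l-x)\Gamma(l+x)}{\Gamma(l)^{2}}\frac{1}{l(l-z-tx)}$, the expansion via Proposition~\ref{2} and the binomial expansion of $(z+tx)^{m}$ are exactly the same as in the paper. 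Your approach is slightly longer but more self-contained, since it does not need to cite Proposition~\ref{tMZV-diff} as a separate ingredient; the paper's approach is shorter because that proposition packages the combinatorics once and for all.
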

\begin{proof}
We begin with the case of $l=0$.
If we put $y=-x$ in Lemma \ref{tMZVG-int},
\[
\sum_{n=0}^{\infty}\left(\sum_{a+b=n}(-1)^b\zeta^t(\{1\}^a,b+2)\right)x^n=
\sum_{l=1}^{\infty}\frac{\Gamma(l-x)\Gamma(l+x)}{\Gamma(l)^2}\frac{1}{l(l-tx)}.
\]
By using Proposition \ref{2}, the right hand side is transformed 
\begin{equation}
\begin{split}
\sum_{l=1}^{\infty}\frac{\Gamma(l-x)\Gamma(l+x)}{\Gamma(l)^2}\frac{1}{l(l-tx)}&=\sum_{l=1}^{\infty}\frac{\Gamma(l-x)\Gamma(l+x)}{\Gamma(l)^2}\frac{1}{l^2}\sum_{m=0}^{\infty}\frac{(tx)^m}{l^m}\\
&=
\sum_{m=0}^{\infty}\sum_{l=1}^{\infty}\frac{(tx)^m}{l^{m+2}}\prod_{k\geq l}\left(1-\frac{x^2}{k^2}\right)^{-1}\\
&=
\sum_{n=0}^{\infty}\left(\sum_{a+2b=n}t^a\zeta^{\star}(a+2,\{2\}^b)\right)x^n. \nonumber
\end{split}
\end{equation}
By comparing the coefficients on both sides, the case of $l=0$ is proved.
The generalized case is proved by differentiating both sides of the above case $l$ times with respect $t$ and applying  Proposition \ref{tMZV-diff}. 
\end{proof}

\begin{Remark}
If we put $y=0$ in Lemma \ref{tMZVG-int} immediately, we get 
\[
\zeta^{t}(\{1\}^p,2)=(1+t+t^2+\cdots+t^p)\zeta(p+2).
\]
By differentiating both sides repeatedly with respect $t$, we get ordinary sum formulas for $\mathrm{t}$-$\mathrm{MZVs}$ $($\cite{Y1,ZC}$)$.
\end{Remark}

\section{Alternating sum for a certain t-Arakawa-Kaneko multiple zeta values}

In this section, we define a certain ``t-Arakawa-Kaneko multiple zeta value (t-AK-MZV)" which was introduced in \cite{OW2}, and discuss its alternating sums.

\begin{df}
For $r\geq1$, $k_1,\ldots,k_r\geq1$, and $m\geq1$, we define
\[
\xi_t(k_1,\cdots,k_r;m)\coloneqq \sum_{\mathbf{k}'}t^{\sigma(\mathbf{k}')}\xi(\mathbf{k}';m).
\]
where $\sigma(\mathbf{k}')\coloneqq r-\mathrm{dep}(\mathbf{k}')$, and the sum runs over all indices of the form $(k_1\Box k_2\Box\cdots \Box k_r)$  in which each  $\Box$ is filled by $``,"$ or $``+"$.
\end{df}

\begin{Remark}
In \cite{OW,OW2}, the interpolations between the Arakawa-Kaneko multiple zeta function and the Kaneko-Tsumura multiple zeta function:
\[
\xi^t (\mathbf{k};s)=\frac{1}{\Gamma(s)}\int^{\infty}_{0}x^{s-1}\frac{\mathrm{Li}^{t}_{\mathbf{k}}(1-e^{-x})}{e^x-1}dx
\],
\[
\eta^t (\mathbf{k};s)=\frac{1}{\Gamma(s)}\int^{\infty}_{0}x^{s-1}\frac{\mathrm{Li}^{t}_{\mathbf{k}}(1-e^{x})}{1-e^x}dx,
\]
where
\[
\mathrm{Li}^{t}_{\mathbf{k}}(z)=\sum_{\mathbf{k}\preceq \mathbf{k'}}t^{\mathbf{dep(k')}-r}\mathrm{Li}_{\mathbf{k'}}(z),
\]
was already constructed and several properties were studied. The above definition is different from them.
\end{Remark}

From the definition, we get a Yamamoto's integral expression of t-AK-MZV as follow.

\begin{prop}\label{tAK-MZV-yint}
For $r\geq1,~~k_1,\ldots,k_r\geq1,~~m\geq1$, we have
\[
\xi_t(k_1,\ldots,k_r;m)=
I\left(
\begin{xy}
(0,32)*{},
(-12,0)*{},
(12,0)*{},
{(0,28)\ar@{o-o}(-8,24)},
{(-8,24)\ar@{.o}(-8,16)},
{(-8,16)\ar@{-}(-8,12)},
(-8,12)*{\odot},
{(-8,12)\ar@{.}(-8,4)},
(-8,4)*{\odot},
{(-8,4)\ar@{-o}(-8,0)},
{(-8,0)\ar@{.o}(-8,-8)},
{(-8,-8)\ar@{-}(-8,-12)},
(-8,-12)*{\odot},
{(-8,-12)\ar@{-o}(-8,-16)},
{(-8,-16)\ar@{.o}(-8,-24)},
{(-8,-24)\ar@{-{*}}(-8,-28)},
{(0,28)\ar@{-{*}}(8,24)},
{(8,24)\ar@{.{*}}(8,16)},
{(-9,24)\ar@/_1mm/@{-}_{k_r}(-9,12)},
{(-9,0)\ar@/_1mm/@{-}_{k_2}(-9,-12)},
{(-9,-16)\ar@/_1mm/@{-}_{k_1}(-9,-28)},
{(9,24)\ar@/^1mm/@{-}^{m-1}(9,16)},
\end{xy}
\right),
\]
where $\odot=(\frac{1}{1-u}+\frac{t}{u})du$.
\end{prop}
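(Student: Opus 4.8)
The plan is to derive the Yamamoto integral expression for $\xi_t(k_1,\ldots,k_r;m)$ directly from the definition, exactly mirroring the proof style already used for Proposition \ref{tMZV-yint} and Proposition \ref{xi-int}. First I would recall Theorem \ref{xi-yint}, which expresses each ordinary $\xi(\mathbf{k}';m)$ as a Yamamoto integral $I(\cdot)$ over a labelled poset: a ``trunk'' with the bottom run of $k_1$ edges of type $\frac{du}{1-u}$, then a vertex, then the next run of $k_2$ such edges, and so on up through $k_r$, and finally a short ``branch'' carrying the $m-1$ factors of type $\frac{du}{u}$ joined at the top, with the last integration variable $u_{r+1}$ contributing $\frac{du_{r+1}}{u_{r+1}}$. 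The key observation is that summing $\xi(\mathbf{k}';m)$ over all indices $\mathbf{k}'$ obtained from $(k_1,\ldots,k_r)$ by replacing each interior comma by either ``$,$'' or ``$+$'', weighted by $t^{\sigma(\mathbf{k}')}$, has the effect of replacing each of the $r-1$ interior ``separating'' vertices of the poset by the symbol $\odot = \left(\frac{1}{1-u}+\frac{t}{u}\right)du$: choosing ``$,$'' keeps the vertex (contributing the factor $\frac{1}{1-u}$ coming from the first edge of the next block), while choosing ``$+$'' merges two adjacent blocks and inserts instead a $\frac{t}{u}$ edge, with the power of $t$ precisely tracking the number of ``$+$'' signs, i.e. $\sigma(\mathbf{k}')$.

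Concretely, the steps are: (i) write the defining sum $\xi_t(k_1,\ldots,k_r;m)=\sum_{\mathbf{k}'}t^{\sigma(\mathbf{k}')}\xi(\mathbf{k}';m)$ and substitute the integral expression from Theorem \ref{xi-yint} for each summand; (ii) expand each of the $r-1$ internal vertices using $\left(\frac{1}{1-u}+\frac{t}{u}\right)du$ by multilinearity of Yamamoto's integral in its one-form labels, so that the full sum collapses into a single integral $I(\cdot)$ over the poset pictured in the statement, with $\odot$'s at the $r-1$ internal junctions and the $k_i$-blocks and the $m-1$ branch unchanged; (iii) check the boundary cases — the very bottom vertex of the trunk stays $\bullet$ (the start of the path, no $t$-term there, since it is not a ``separator''), and the top of the branch remains the ordinary $I$-diagram join — so the diagram matches exactly the one in the proposition. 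This is essentially a bookkeeping argument, which is why the author's proof is just ``It is obvious from the definition,'' and I would present it at roughly that level of detail, perhaps spelling out the vertex-by-vertex expansion once.

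The only genuine subtlety — and the one place I would be slightly careful — is the correspondence between the $2^{r-1}$ choices of $\Box\in\{,\,+\}$ and the $2^{r-1}$ terms obtained by expanding the product of $r-1$ binomials $\left(\frac{1}{1-u}+\frac{t}{u}\right)$: one must verify that when $\Box_i$ is ``$+$'' the corresponding $\odot$ genuinely contributes its $\frac{t}{u}$-branch and that the resulting one-form sequence is exactly the Yamamoto diagram of the contracted index $\mathbf{k}'$, with the right total power of $t$. This is where the identification $\sigma(\mathbf{k}')=(\text{number of ``}+\text{'' signs})$ does the work, and it is immediate from the definition of $\sigma$ as $r-\mathrm{dep}(\mathbf{k}')$. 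No analytic input (convergence, interchange of sum and integral) is needed beyond what is already implicit in Theorem \ref{xi-yint}, since the sum over $\mathbf{k}'$ is finite. Hence there is no real obstacle; the proof is a direct unwinding of the definition together with the multilinearity of the integral symbol $I$.
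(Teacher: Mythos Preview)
Your approach is correct and is exactly the paper's: the author writes only ``It is proved easily from the definition and Theorem \ref{xi-yint},'' which is precisely the unwinding-plus-multilinearity argument you describe. One cosmetic slip: in Theorem \ref{xi-yint} each block of size $k_i$ is one $\bullet=\frac{du}{1-u}$ followed by $k_i-1$ copies of $\circ=\frac{du}{u}$, not $k_i$ edges all of type $\frac{du}{1-u}$; the $\odot$'s replace the $r-1$ interior $\bullet$-vertices, and with that correction your ``$,$'' versus ``$+$'' identification goes through verbatim.
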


\begin{proof}
It is proved easily from the definition and Theorem \ref{xi-yint}. 
\end{proof}
From this expression and definition, $\xi_0(k_1,\ldots,k_r;m)=\xi(k_1,\ldots,k_r;m)$ and $\xi_1(\{1\}^{r+1};m+1)=\eta(r+1;m+1)$, where $\eta(r+1;m+1)$ is the Kaneko-Tsumura multiple zeta function which is introduced in \cite{KT}

Next, we show analogies of  Lemma \ref{tMZV-int} and Proposition \ref{tMZV-diff} for height one t-AK-MZVs.

\begin{Lem}\label{tAK-MZV-int}
For $r\geq0,~~m\geq0$ and $t\in \mathbb{R}$, we have
\[
\xi_t(\{1\}^{r+1};m+1)=
\frac{1}{r!m!}\int_{E_2}
\left(
\log\frac{1-u_1}{1-u_2}+t\log\frac{u_2}{u_1}
\right)^r
\left(
\log\frac{1}{1-u_2}
\right)^{m}
\frac{du_1du_2}{(1-u_1)u_2}.
\]
\end{Lem}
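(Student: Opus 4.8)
The plan is to derive Lemma \ref{tAK-MZV-int} directly from the Yamamoto integral expression in Proposition \ref{tAK-MZV-yint} specialized to the index $\mathbf{k}=(\{1\}^{r+1})$ and the argument $m$, in exactly the same spirit as the proof of Lemma \ref{tMZV-int}. First I would write down the diagram for $\xi_t(\{1\}^{r+1};m+1)$: it is a single vertical path carrying $r+1$ consecutive $\odot$-edges (the $k_1=\cdots=k_{r+1}=1$ part), followed by the terminal structure coming from the $m-1$ label, with the usual boundary vertices $\bullet$ and $\circ$ at the two ends. By the definition of Yamamoto's integral $I(\cdot)$, this labelled poset integral unfolds over the simplex; collapsing the two ``columns'' of the poset shape (the $\odot$-column of length $r$ together with the analytic weights, and the $\circ$-column of length $m$) produces a two-variable integral over $E_2$ with integrand built from the iterated integrals $\int_{u_1}^{u_2}(\frac{1}{1-u}+\frac t u)\,du$ raised to the $r$-th power and $\int_{u_2}^{1}\frac{1}{1-u}\,du$ raised to the $m$-th power, divided by the $r!\,m!$ coming from the repeated identical edges, against the boundary measure $\frac{du_1\,du_2}{(1-u_1)u_2}$. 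This is the same combinatorial reduction (identical edges contribute a factorial, a block of $p$ edges between fixed endpoints $u_i<u_{i+1}$ integrates to $\frac{1}{p!}(\text{single edge integral})^p$) that is used silently in the proof of Lemma \ref{tMZV-int} and in Proposition \ref{xi-int}.

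The key computation is then just to evaluate the two inner integrals in closed form. We have
\[
\int_{u_1}^{u_2}\left(\frac{1}{1-u}+\frac{t}{u}\right)du
=\log\frac{1-u_1}{1-u_2}+t\log\frac{u_2}{u_1},
\qquad
\int_{u_2}^{1}\frac{1}{1-u}\,du=\log\frac{1}{1-u_2},
\]
the second being the new feature compared with Lemma \ref{tMZV-int}, where the terminal edge was $\frac{du}{u}$ and produced $\log\frac{1}{u_2}$ instead; here the terminal vertex is $\circ$ attached to $\frac{du}{u}$ in the AK setting, wait — more precisely, in $\xi$ the innermost $m-1$ block together with the final $\circ$-$\bullet$ structure yields the factor $\log\frac{1}{1-u_2}$ exactly as in Proposition \ref{xi-int}(1). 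Substituting these two evaluations into the reduced $E_2$-integral gives precisely the right-hand side of the Lemma, including the $\frac{1}{r!\,m!}$ normalization.

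So the steps, in order, are: (i) write the Yamamoto diagram for $\xi_t(\{1\}^{r+1};m+1)$ from Proposition \ref{tAK-MZV-yint}; (ii) reduce it to an integral over $E_2$ by contracting the two blocks of identical edges, picking up $\frac{1}{r!}$ and $\frac{1}{m!}$ and the correct boundary measure $\frac{du_1\,du_2}{(1-u_1)u_2}$ exactly as in Proposition \ref{xi-int}(1) and Lemma \ref{tMZV-int}; (iii) evaluate the two inner one-dimensional integrals as displayed above; (iv) read off the claimed formula. I expect the only genuine subtlety — the ``main obstacle'' such as it is — to be bookkeeping the diagram contraction carefully: making sure the $\odot$ block really contributes the mixed logarithm $\log\frac{1-u_1}{1-u_2}+t\log\frac{u_2}{u_1}$ with the correct orientation of the inequality $u_1<u_2$, and that the terminal $m$-block plus the boundary vertices at both ends produce $\log\frac{1}{1-u_2}$ (not $\log\frac1{u_2}$), i.e. that the ``$u_2$'' endpoint is the $1/(1-u)$-type vertex on the left branch while the right branch carries the $m-1$ label. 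Once the diagram is drawn correctly this is entirely mechanical, and I would simply refer to the proofs of Proposition \ref{xi-int} and Lemma \ref{tMZV-int} for the contraction step rather than repeating it.
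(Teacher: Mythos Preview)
Your approach is correct and is exactly what the paper does: specialize Proposition \ref{tAK-MZV-yint} to $\mathbf{k}=(\{1\}^{r+1})$, contract the block of $r$ identical $\odot$-vertices and the block of $m$ identical $\bullet$-vertices to pick up $\tfrac{1}{r!\,m!}$, and evaluate the two resulting one-dimensional integrals to obtain the stated logarithms. The paper presents this as two diagram equalities with no words, but the content is identical to your steps (i)--(iv); the only slip is that the left column carries $r$ (not $r+1$) $\odot$-vertices, which you in any case use correctly when you write the exponent $r$ and the factor $1/r!$.
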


\begin{proof}
From Proposition \ref{tAK-MZV-yint},
\begin{equation}
\begin{split}
\xi_t(\{1\}^{r+1};m+1)&=
I\left(
\begin{xy}
(0,12)*{},
(0,-12)*{},
{(0,8)\ar@{o-}(-8,4)},
{(-8,4)\ar@{.}(-8,-4)},
(-8,4)*{\odot},
{(-8,-4)\ar@{-{*}}(-8,-8)},
(-8,-4)*{\odot},
{(0,8)\ar@{-{*}}(8,4)},
{(8,4)\ar@{.{*}}(8,-4)},
{(-9,4)\ar@/_1mm/@{-}_{r}(-9,-4)},
{(9,4)\ar@/^1mm/@{-}^{m}(9,-4)},
\end{xy}
\right)\\
&=
\frac{1}{r!m!}
I\left(
\begin{xy}
(0,12)*{},
(0,-12)*{},
{(0,10)\ar@{o-}(-12,-4)},
{(0,10)\ar@{-}(-4,-4)},
{(-12,-4)\ar@{.}(-4,-4)},
{(-12,-4)\ar@{-}(-8,-8)},
(-12,-4)*{\odot},
{(-4,-4)\ar@{-{*}}(-8,-8)},
(-4,-4)*{\odot},
{(0,10)\ar@{-{*}}(12,-4)},
{(0,10)\ar@{-{*}}(4,-4)},
{(4,-4)\ar@{.}(12,-4)},
{(-12,-3)\ar @/^1mm/@{-}^{r}(-4,-3)},
{(4,-5)\ar @/_1mm/@{-}_{m}(12,-5)},
\end{xy}
\right)\\
&=(r.h.s).\nonumber
\end{split}
\end{equation}
\end{proof}

\begin{prop}\label{tAK-MZV-diff}
For $r\geq l\geq0,~m\geq0$ we have
\[
\frac{d^l}{dt^l}\xi_t(\{1\}^{r+1};m+1)=l!\sum_{k_1+\cdots+k_{r-l+1}=l}\xi_t(k_1+1,\ldots,k_{r-l+1}+1;m+1).
\]
\end{prop}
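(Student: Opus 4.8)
The plan is to pass to the integral representation of Lemma~\ref{tAK-MZV-int}, differentiate it $l$ times in $t$, and recognize the result as $l!$ times the sum in question. Write $A=\log\frac{1-u_1}{1-u_2}$, $B=\log\frac{u_2}{u_1}$ and $C=\bigl(\log\frac{1}{1-u_2}\bigr)^{m}$, so that Lemma~\ref{tAK-MZV-int} reads $\xi_t(\{1\}^{r+1};m+1)=\frac{1}{r!\,m!}\int_{E_2}(A+tB)^{r}\,C\,\frac{du_1du_2}{(1-u_1)u_2}$. Since $A,B,C$ are independent of $t$ and the integral is a polynomial in $t$, differentiating $l$ times $(0\le l\le r)$ and using $\frac{d^{l}}{dt^{l}}(A+tB)^{r}=\frac{r!}{(r-l)!}B^{l}(A+tB)^{r-l}$ gives
\[
\frac{d^{l}}{dt^{l}}\xi_t(\{1\}^{r+1};m+1)=\frac{1}{(r-l)!\,m!}\int_{E_2}B^{l}(A+tB)^{r-l}\,C\,\frac{du_1du_2}{(1-u_1)u_2}.
\]
Thus the proposition is equivalent to the integral identity, valid for every $s\ge1$,
\[
\sum_{k_1+\cdots+k_{s}=l}\xi_t(k_1+1,\ldots,k_{s}+1;m+1)=\frac{1}{l!\,(s-1)!\,m!}\int_{E_2}B^{l}(A+tB)^{s-1}\,C\,\frac{du_1du_2}{(1-u_1)u_2},
\]
applied with $s=r-l+1$ (so that $s-1=r-l$, and $s\ge1$ since $l\le r$).

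To prove this identity I would first upgrade Lemma~\ref{tAK-MZV-int} to arbitrary indices. By the same diagram reduction used in the proofs of Proposition~\ref{xi-int} and Lemma~\ref{tAK-MZV-int}, Proposition~\ref{tAK-MZV-yint} gives, for $k_1,\ldots,k_s\ge0$,
\[
\xi_t(k_1+1,\ldots,k_s+1;m+1)=\frac{1}{k_1!\cdots k_s!\,m!}\int_{E_{s+1}}\prod_{i=1}^{s}\Bigl(\log\tfrac{u_{i+1}}{u_i}\Bigr)^{k_i}\Bigl(\log\tfrac{1}{1-u_{s+1}}\Bigr)^{m}\,\frac{1}{1-u_1}\prod_{i=2}^{s}\Bigl(\tfrac{1}{1-u_i}+\tfrac{t}{u_i}\Bigr)\frac{du_1\cdots du_{s+1}}{u_{s+1}},
\]
where the $\odot$-vertices of Proposition~\ref{tAK-MZV-yint} occur precisely at $u_2,\ldots,u_s$ — the $s-1$ separators that the definition of $\xi_t$ lets become ``$+$'' — while $u_1$ and $u_{s+1}$ carry the unchanged letters $\frac{du_1}{1-u_1}$ and $\frac{du_{s+1}}{u_{s+1}}$; this reduces to Proposition~\ref{xi-int}(2) at $t=0$ and, upon taking all $k_i=0$ and integrating out $u_2,\ldots,u_s$, to Lemma~\ref{tAK-MZV-int}. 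Summing over $k_1+\cdots+k_s=l$, the multinomial theorem turns $\sum_{k_1+\cdots+k_s=l}\prod_{i=1}^{s}\frac{1}{k_i!}\bigl(\log\tfrac{u_{i+1}}{u_i}\bigr)^{k_i}$ into $\frac{1}{l!}\bigl(\log\tfrac{u_{s+1}}{u_1}\bigr)^{l}$ (the logarithms telescope), and integrating out $u_2,\ldots,u_s$ over $u_1<u_2<\cdots<u_s<u_{s+1}$ produces the factor $\frac{1}{(s-1)!}\bigl(\log\tfrac{1-u_1}{1-u_{s+1}}+t\log\tfrac{u_{s+1}}{u_1}\bigr)^{s-1}$ from $\int_{u_1}^{u_{s+1}}\bigl(\tfrac{1}{1-u}+\tfrac{t}{u}\bigr)du$. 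Renaming $u_{s+1}$ as $u_2$ is exactly the asserted identity; combining it (with $s=r-l+1$, multiplied by $l!$) with the formula above for the $l$-th derivative completes the proof.

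I expect the main obstacle to be this upgraded integral representation: carrying out the diagram reduction from Proposition~\ref{tAK-MZV-yint} carefully and, in particular, checking that the $\odot$-vertices fall on $u_2,\ldots,u_s$ and on no other variable. Everything afterward is the multinomial theorem and a one-variable integral, as indicated.

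Two variants are worth recording. One can avoid Proposition~\ref{tAK-MZV-yint} entirely by expanding $\xi_t(k_1+1,\ldots,k_s+1;m+1)=\sum_{\mathbf k'}t^{\sigma(\mathbf k')}\xi(\mathbf k';m+1)$ with Proposition~\ref{xi-int}(2) and reorganizing the double sum: each $\xi(c_1,\ldots,c_d;m+1)$ with $c_1+\cdots+c_d=l+s$ then occurs with multiplicity $\sum_{e_1+\cdots+e_d=s,\ e_i\ge1}\prod_i\binom{c_i-1}{e_i-1}=\binom{l+s-d}{l}$, after which the same multinomial-plus-one-dimensional-integral computation yields the identity. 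A purely combinatorial alternative is to first prove the $\xi$-analogue of Proposition~\ref{tMZV-diff}, $\frac{d}{dt}\xi_t(\mathbf k;s)=\sum_{\sigma(\mathbf k')=1}\xi_t(\mathbf k';s)$ (immediate from the definition, since an index obtained from $\mathbf k$ by $p\ge1$ merges is reached through a ``first'' single merge in exactly $p$ ways), and then to proceed by induction on $l$, counting un-merges: each target $(\kappa_1+1,\ldots,\kappa_{r-l+1}+1)$ with $\kappa_1+\cdots+\kappa_{r-l+1}=l$ is produced $\kappa_1+\cdots+\kappa_{r-l+1}=l$ times, which supplies the factor $l=l!/(l-1)!$.
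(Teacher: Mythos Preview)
Your proposal is correct and follows essentially the same route as the paper: differentiate the two-variable integral of Lemma~\ref{tAK-MZV-int} to obtain $\frac{1}{(r-l)!\,m!}\int_{E_2}(A+tB)^{r-l}B^{l}C\,\frac{du_1du_2}{(1-u_1)u_2}$, and then identify this with $l!$ times the sum on the right. The only cosmetic difference is that the paper packages the identification step as a single Yamamoto integral diagram (a branched poset with an $\odot$-antichain of size $r-l$, a $\circ$-antichain of size $l$, and a $\bullet$-antichain of size $m$), whereas you unpack that diagram into the explicit $(s{+}1)$-fold iterated integral for $\xi_t(k_1{+}1,\ldots,k_s{+}1;m{+}1)$ and then invoke the multinomial theorem and the symmetric one-variable integration directly; these are the same computation.
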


\begin{proof}
By Proposition \ref{tAK-MZV-yint},
\begin{equation}
\begin{split}
\frac{d^l}{dt^l}&\xi_t(\{1\}^{r+1};m+1)\\
&=\frac{1}{(r-l)!m!}
\int_{E_2}
\left(
\log\frac{1-u_1}{1-u_2}+t\log\frac{u_2}{u_1}
\right)^{r-l}
\left(
\log\frac{u_2}{u_1}
\right)^l
\left(
\log\frac{1}{1-u_2}
\right)^{m}
\frac{du_1du_2}{(1-u_1)u_2}\\
&=l!
I\left(
\begin{xy}
(0,-12)*{},
(0,12)*{},
{(0,8)\ar@{o-}(-12,4)},
{(0,8)\ar@{-o}(-4,4)},
(-12,4)*{\odot},
{(-12,4)\ar@{.}(-12,-4)},
{(-12,-4)\ar@{-{*}}(-8,-8)},
(-12,-4)*{\odot},
{(-4,4)\ar@{.}(-4,-4)},
{(-4,-4)\ar@{o-o}(-8,-8)},
{(0,8)\ar@{-{*}}(6,4)},
{(6,4)\ar@{.{*}}(6,-4)},
{(-13,4)\ar@/_1mm/@{-}_{r-l}(-13,-4)},
{(-3,4)\ar@/^1mm/@{-}^{l}(-3,-4)},
{(7,4)\ar@/^1mm/@{-}^{m}(7,-4)},
\end{xy}
\right)\\
&=(r.h.s).\nonumber
\end{split}
\end{equation}
\end{proof}

Our third alternating sum formulas are obtained by calculating following generating function.

\begin{Lem}\label{tAK-MZVG-int}
For $t\in\mathbb{R}$, $|tx|<1,~~|x|<1,~~|x+y|<1$, we have
\begin{equation}
\begin{split}
\sum_{k,m=0}^{\infty}\xi_t(\{1\}^{r+1};m+1)
x^ky^m
&=
\sum_{l=1}^{\infty}\frac{\Gamma(l-x)\Gamma(1-x-y)}{\Gamma(1-x)\Gamma(l+1-x-y)}\frac{1}{l-tx}\nonumber
\end{split}
\end{equation}
\end{Lem}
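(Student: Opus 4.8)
The plan is to follow the proof of Lemma~\ref{tMZVG-int} almost verbatim. First I would substitute the iterated-integral formula of Lemma~\ref{tAK-MZV-int} for each $\xi_t(\{1\}^{r+1};m+1)$ on the left-hand side and carry out the summations over $r$ and $m$ under the integral sign. Because $\sum_{r\ge0}\frac{x^r}{r!}A^r=e^{xA}$ (and likewise for $m$), the factorials cancel and the powers of logarithms become powers of $\frac{1-u_1}{1-u_2}$, $\frac{u_2}{u_1}$ and $\frac{1}{1-u_2}$. Interchanging the double sum with $\int_{E_2}$ is legitimate for $|x|,|y|,|tx|$ small, by absolute convergence, exactly as in the proofs of Lemma~\ref{xiG-int}, Theorem~\ref{main1} and Lemma~\ref{tMZVG-int}; the hypotheses $|tx|<1$, $|x|<1$, $|x+y|<1$ are precisely what is needed for the Beta integrals below to converge. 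This identifies the left-hand side with
\[
\int_{E_2}\left(\frac{1-u_1}{1-u_2}\right)^{x}\left(\frac{u_2}{u_1}\right)^{tx}\left(\frac{1}{1-u_2}\right)^{y}\,\frac{du_1\,du_2}{(1-u_1)u_2},
\]
which is the integral of Lemma~\ref{tMZVG-int} with $u_2^{-y}$ replaced by $(1-u_2)^{-y}$.

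Next I would write the integrand as $(1-u_1)^{x-1}u_1^{-tx}(1-u_2)^{-x-y}u_2^{tx-1}$ and expand, by the generalized binomial theorem (together with the reflection formula, as in the proof of Lemma~\ref{tMZVG-int}),
\[
(1-u_1)^{x-1}=\sum_{l\ge1}\frac{\Gamma(l-x)}{\Gamma(1-x)\Gamma(l)}\,u_1^{l-1}.
\]
Integrating term by term in $u_1$ over $(0,u_2)$ gives $\frac{u_2^{\,l-tx}}{l-tx}$; substituting back, the powers of $u_2$ collapse to $u_2^{\,l-1}$, so the remaining integral over $u_2\in(0,1)$ is $B(l,1-x-y)=\frac{\Gamma(l)\Gamma(1-x-y)}{\Gamma(l+1-x-y)}$. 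Multiplying the three factors and cancelling $\Gamma(l)$ yields
\[
\sum_{l\ge1}\frac{\Gamma(l-x)\,\Gamma(1-x-y)}{\Gamma(1-x)\,\Gamma(l+1-x-y)}\,\frac{1}{l-tx},
\]
which is the asserted right-hand side. Equivalently, one may simply re-run the final display in the proof of Lemma~\ref{tMZVG-int}, noting that the single change from $u_2^{-y}$ to $(1-u_2)^{-y}$ turns the last Beta integral $B(1-x,l-y)$ into $B(l,1-x-y)$.

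The computation is routine once the integral expression is available; the only place needing genuine care is the interchange of the double series with the integral, and of the binomial series with the inner integration. But these are handled exactly as in the earlier lemmas of the paper, using the stated smallness hypotheses on $x$, $y$ and $tx$, so I do not expect a real obstacle here beyond bookkeeping.
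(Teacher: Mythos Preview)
Your proposal is correct and follows essentially the same route as the paper: summing Lemma~\ref{tAK-MZV-int} to the double integral, expanding $(1-u_1)^{x-1}$ binomially, integrating in $u_1$ to produce $\frac{1}{l-tx}$, and evaluating the remaining $u_2$-integral as $B(l,1-x-y)$. The paper's own proof is exactly this chain of equalities (with the interchange of sum and integral left implicit), so there is nothing to add.
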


\begin{proof}
From Lemma \ref{tAK-MZV-int},
\begin{equation}
\begin{split}
\sum_{k,n=0}^{\infty}\xi_t(\{1\}^{r+1};m+1)
x^ky^n
&=
\int_{E_2}
\left(
\frac{1-u_1}{1-u_2}
\right)^x
\left(
\frac{u_2}{u_1}
\right)^{tx}
\left(
\frac{1}{1-u_2}
\right)^y
\frac{du_1du_2}{(1-u_1)u_2}\\
&=
\sum_{l=1}^{\infty}
\frac{\Gamma(l-x)}{\Gamma(l)\Gamma(1-x)}
\int_{E_2}
{u_1}^{l-tx-1}
\left(
\frac{1}{1-u_2}
\right)^{x+y}
u_2^{tx-1}
du_1du_2\\
&=
\sum_{l=1}^{\infty}
\frac{\Gamma(l-x)}{\Gamma(l)\Gamma(1-x)}
\frac{1}{l-tx}
\int_{0}^{1}
(1-u_2)^{-x-y}
u_2^{l-1}
du_2\\
&=
\sum_{l=1}^{\infty}
\frac{\Gamma(l-x)}{\Gamma(l)\Gamma(1-x)}
\frac{1}{l-tx}
B(l,1-x-y)\\
&=
\sum_{l=1}^{\infty}
\frac{\Gamma(l-x)}{\Gamma(l)\Gamma(1-x)}
\frac{1}{l-tx}
\frac{\Gamma(l)\Gamma(1-x-y)}{\Gamma(l+1-x-y)}\\
&=
\sum_{l=1}^{\infty}\frac{\Gamma(l-x)\Gamma(1-x-y)}{\Gamma(1-x)\Gamma(l+1-x-y)}\frac{1}{l-tx}.\nonumber
\end{split}
\end{equation}
\end{proof}

If we put $y=-x$ in Lemma \ref{tAK-MZVG-int}, an alternating sum formula for "height one" t-AK-MZVs is obtained.

\begin{prop}\label{sum-int}
For $k\geq0$ and $t\in \mathbb{R}$, we have
\[
\sum_{r+m=k}(-1)^m\xi_t(\{1\}^{r+1};m+1)=\sum_{a+b=k}(-1)^at^b\zeta(\{1\}^a,b+2).
\]
In particular, when $t=0$, 
\[
\sum_{r+m=k}(-1)^m\xi(\{1\}^{r+1};m+1)=(-1)^k\zeta(k+2).
\]
\end{prop}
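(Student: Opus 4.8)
The plan is to specialize the generating-function identity of Lemma \ref{tAK-MZVG-int} at $y=-x$ and then read off the coefficient of $x^k$ on both sides. Setting $y=-x$, the left-hand side becomes $\sum_{k,m\ge0}\xi_t(\{1\}^{r+1};m+1)x^k(-x)^m$; but note the index $r$ on the summand must track the power of $x$ (this is the usual abuse in the statement of Lemma \ref{tAK-MZVG-int} where the exponent of $x^k$ secretly records the number of $1$'s), so after collecting powers of $x$ the left-hand side is $\sum_{k\ge0}\bigl(\sum_{r+m=k}(-1)^m\xi_t(\{1\}^{r+1};m+1)\bigr)x^k$, which is exactly the generating function of the desired left-hand side. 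On the right-hand side, substituting $y=-x$ collapses the Gamma quotient: $\Gamma(1-x-y)=\Gamma(1)=1$ and $\Gamma(l+1-x-y)=\Gamma(l+1)=l!$, so the right-hand side of Lemma \ref{tAK-MZVG-int} becomes
\[
\sum_{l=1}^{\infty}\frac{\Gamma(l-x)}{\Gamma(1-x)}\frac{1}{l!}\frac{1}{l-tx}
=\sum_{l=1}^{\infty}\frac{1}{l}\frac{\Gamma(l-x)}{\Gamma(l)\Gamma(1-x)}\frac{1}{l-tx}.
\]

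Next I would expand $\frac{\Gamma(l-x)}{\Gamma(l)\Gamma(1-x)}$. Since $\frac{\Gamma(l-x)}{\Gamma(l)\Gamma(1-x)}=\prod_{j=1}^{l-1}\frac{j-x}{j}=\prod_{j=1}^{l-1}\bigl(1-\tfrac{x}{j}\bigr)$, this is the generating function whose $x^a$-coefficient is $(-1)^a$ times the elementary symmetric sum $e_a(1,\tfrac12,\dots,\tfrac1{l-1})$; together with the extra factor $\frac1l$ and $\frac{1}{l-tx}=\frac1l\sum_{b\ge0}(tx/l)^b$, one gets
\[
\frac{1}{l}\cdot\prod_{j=1}^{l-1}\Bigl(1-\frac{x}{j}\Bigr)\cdot\frac{1}{l}\sum_{b\ge0}\frac{(tx)^b}{l^b}.
\]
Summing over $l$ and matching the coefficient of $x^k$, the $\prod_{j=1}^{l-1}(1-x/j)$ part contributes a harmonic-type sum over $0<j_1<\cdots<j_a<l$ with $1/(j_1\cdots j_a)$, and combined with $\frac{1}{l^{2+b}}$ this is precisely the nested sum defining $\zeta(\{1\}^a,b+2)$. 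Carefully bookkeeping signs: the factor from the product is $(-1)^a$, the factor $(-1)^m$ on the left gives on the right a single $(-x)^{\,?}$—actually the $(-1)$'s have already been absorbed—so the coefficient of $x^k$ on the right is $\sum_{a+b=k}(-1)^a t^b\zeta(\{1\}^a,b+2)$, as claimed. Comparing coefficients of $x^k$ on the two sides yields the first formula.

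For the ``in particular'' statement, I would simply set $t=0$: every term with $b\ge1$ vanishes, leaving only $a=k$, $b=0$, which gives $(-1)^k\zeta(\{1\}^k,2)$; but $\zeta(\{1\}^k,2)=\zeta(k+2)$ by the classical sum formula for height-one multiple zeta values (equivalently, this is the $t=0$, depth-specialized case already recorded in the Remark after Theorem \ref{main2}, namely $\zeta^0(\{1\}^p,2)=\zeta(p+2)$). Alternatively one can cite the $l=0$ case of Theorem \ref{main1}(1) combined with Proposition \ref{xi-int}, but invoking the elementary identity $\zeta(\{1\}^k,2)=\zeta(k+2)$ is cleanest.

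The main obstacle I anticipate is not any deep step but the careful tracking of the index $r$ and of signs: Lemma \ref{tAK-MZVG-int}'s left-hand side as written has a free $r$ that does not appear in the stated summation range, so one must be explicit that the power of $x$ is what counts the repeated $1$'s, i.e. that the correct reading is $\sum_{r,m\ge0}\xi_t(\{1\}^{r+1};m+1)x^r y^m$. Once that identification is pinned down, substituting $y=-x$, simplifying the Gamma factors, and extracting coefficients via the product expansion of $\Gamma(l-x)/(\Gamma(l)\Gamma(1-x))$ are all routine; the only care needed is to confirm that the nested-sum that falls out of $\prod_{j<l}(1-x/j)\cdot l^{-2-b}$ matches the standard series for $\zeta(\{1\}^a,b+2)$ with the correct strict inequalities.
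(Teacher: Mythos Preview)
Your proposal is correct and follows essentially the same route as the paper's proof: substitute $y=-x$ in Lemma~\ref{tAK-MZVG-int}, simplify the Gamma quotient to $\sum_{l\ge1}\frac{1}{l(l-tx)}\prod_{j=1}^{l-1}(1-x/j)$, expand the product and the geometric series, and compare coefficients of $x^k$. You are also right to flag the indexing abuse in Lemma~\ref{tAK-MZVG-int} (the paper silently reads the exponent of $x$ as counting the repeated $1$'s), and your handling of the $t=0$ case via $\zeta(\{1\}^k,2)=\zeta(k+2)$ is a legitimate way to finish, though the paper simply leaves it implicit.
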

\begin{proof}
If we put $x=-y$ in Lemma \ref{tAK-MZVG-int},
\begin{equation}
\begin{split}
\sum_{k=0}^{\infty}
\Big(
\sum_{r+m=k}&(-1)^m\xi_t(\{1\}^{r+1};m+1)
\Big)x^k\\
&=
\sum_{l=1}^{\infty}
\frac{\Gamma(l-x)}{\Gamma(l)\Gamma(1-x)}
\frac{1}{l-tx}
\frac{\Gamma(1)\Gamma(l)}{\Gamma(l+1)}\\
&=
\sum_{l=1}^{\infty}
\frac{\Gamma(l-x)}{\Gamma(l)\Gamma(1-x)}
\frac{1}{l(l-tx)}\\
&=
\sum_{l=1}^{\infty}
\left(
1-x
\right)
\left(
1-\frac{x}{2}
\right)
\cdots
\left(
1-\frac{x}{l-1}
\right)
\frac{1}{l(l-tx)}\\
&=
\sum_{n=0}^{\infty}
\sum_{l=1}^{\infty}
\left(
1-x
\right)
\left(
1-\frac{x}{2}
\right)
\cdots
\left(
1-\frac{x}{l-1}
\right)
\frac{x^nt^n}{l^{n+2}}\\
&=
\sum_{k=0}^{\infty}
\left(
\sum_{a+b=k}(-1)^at^b\zeta(\{1\}^a,b+2)
\right)x^k.\nonumber
\end{split}
\end{equation}
By comparing the coefficients on both sides, we get the conclusion.
\end{proof}

The above result can be regarded as an interpolation between ``height one" alternating sum for $\xi$ and ``depth two" alternating sum for $\eta$.
In fact, by putting $t=1$ in this formula, we get the following sum formula which was conjectured in \cite{KT} and proved by Kaneko-Sakata.

\begin{Cor}
For $k\geq0$, we have
\begin{equation}
\begin{split}
\sum_{r+m=k}(-1)^m\eta(r+1;m+1)
&=\sum_{a+b=k}(-1)^a\zeta(\{1\}^a,b+2)\\
&=\left\{
\begin{split}
&2\left(1-\frac{1}{2^{2c+1}}\right)\zeta(2c+2)\five (k=2c),\\
&~~~~~~~~~~~~~0 ~~~~~~~~~~~~~~~~~~~~(k=2c+1).
\end{split}\nonumber
\right.
\end{split}
\end{equation}
\end{Cor}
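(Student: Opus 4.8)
The plan is to deduce this corollary directly from Proposition \ref{sum-int} by setting $t=1$ and then evaluating the right-hand side. First I would note that, under the identification coming from Proposition \ref{tAK-MZV-yint}, one has $\xi_1(\{1\}^{r+1};m+1)=\eta(r+1;m+1)$, so the left-hand side of Proposition \ref{sum-int} with $t=1$ becomes exactly $\sum_{r+m=k}(-1)^m\eta(r+1;m+1)$. Simultaneously the right-hand side becomes $\sum_{a+b=k}(-1)^a\zeta(\{1\}^a,b+2)$, which gives the first claimed equality for free.

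The remaining work is to evaluate $\sum_{a+b=k}(-1)^a\zeta(\{1\}^a,b+2)$. The most economical route is to invoke the height-one case of Theorem \ref{main2} (the display with $l=0$, rewritten), or equivalently the specialization of Proposition \ref{sum-int} at $t=1$ combined with the generating identity $\zeta^{t}(\{1\}^p,2)=(1+t+\cdots+t^p)\zeta(p+2)$ from the Remark following Theorem \ref{main2}; at $t=1$ the relevant sum collapses. Concretely, I would use the known height-one alternating sum formula
\[
\sum_{a+b=k}(-1)^a\zeta(\{1\}^a,b+2)=
\begin{cases}
2\bigl(1-2^{-2c-1}\bigr)\zeta(2c+2) & (k=2c),\\
0 & (k=2c+1),
\end{cases}
\]
which follows from the duality $\zeta(\{1\}^a,b+2)=\zeta(\{1\}^b,a+2)$ together with the classical identity $\sum_{a+b=k}(-1)^a\zeta(\{1\}^a,b+2)$ telescoping against $\zeta(k+2)$; alternatively one gets it instantly by plugging $t=1$ into the $l=0$ case of Theorem \ref{main2}, where $\zeta^{\star}(a+2,\{2\}^b)$ at the relevant arguments reduces via the well-known evaluation $\zeta^{\star}(2c+2)=$ a rational multiple of $\zeta(2c+2)$ — more precisely $\zeta^{\star}(\{2\}^0, \dots)$ bookkeeping — to $2(1-2^{-2c-1})\zeta(2c+2)$.

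The main obstacle, such as it is, is purely bookkeeping: one must check that the odd-weight terms cancel and pin down the exact rational constant $2(1-2^{-2c-1})$ in the even case. I expect this constant to emerge from the identity $\sum_{j=0}^{2c+1}(-1)^j\binom{2c+1}{j}\zeta(\{1\}^j, 2c+2-j) $-type manipulations, or more transparently from writing $\sum_{a+b=k}(-1)^a\zeta(\{1\}^a,b+2)$ as the coefficient extraction from $\sum_{l\ge 1} l^{-2}\prod_{j=1}^{l-1}(1-x/j)\cdot(1-x/l)^{-1}$ evaluated at the antidiagonal, which is exactly the computation already carried out in the proof of Proposition \ref{sum-int} with $t=1$. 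Thus the corollary requires no new idea beyond substitution; I would simply present the two equalities, cite Proposition \ref{sum-int} for the first and the height-one evaluation (or Theorem \ref{main2} with $t=1,l=0$) for the second, and record the parity split.
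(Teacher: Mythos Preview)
Your approach to the first equality is exactly the paper's: set $t=1$ in Proposition~\ref{sum-int} and use $\xi_1(\{1\}^{r+1};m+1)=\eta(r+1;m+1)$.

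For the second equality the paper does not attempt any derivation at all; it simply cites the result of Le--Murakami~\cite{Le}. Your plan to ``use the known height-one alternating sum formula'' amounts to the same citation, so the overall strategy matches. However, the justifications you sketch for that formula are off. Plugging $t=1$ into the $l=0$ case of Theorem~\ref{main2} yields
\[
\sum_{a+b=n}(-1)^b\zeta^{\star}(\{1\}^a,b+2)=\sum_{a+2b=n}\zeta^{\star}(a+2,\{2\}^b),
\]
which involves $\zeta^{\star}$ with the sign on $b$, not $\zeta$ with the sign on $a$; it does not give the required sum ``instantly.'' A route that \emph{does} work is to combine the $t=0$ case of Theorem~\ref{main2} with the duality $\zeta(\{1\}^a,b+2)=\zeta(\{1\}^b,a+2)$ (which swaps the sign from $a$ to $b$) and then invoke the known evaluation $\zeta^{\star}(\{2\}^{c+1})=2(1-2^{-2c-1})\zeta(2c+2)$; but that last evaluation is itself a nontrivial input of the same flavor as~\cite{Le}. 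The ``telescoping against $\zeta(k+2)$'' remark is too vague to be a proof. In short: keep the first paragraph, and for the second equality simply cite the literature as the paper does rather than gesturing at derivations that do not close.
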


\begin{proof}
The first equality is immediately concluded by putting $t=1$ in Proposition \ref{sum-int}. The second equality is the very result in \cite{Le}.
\end{proof}

Our third alternating sum formula can be obtained by differentiating both sides of Proposition  \ref{sum-int} repeatedly with respect $t$.

\begin{Thm}\label{main3}
For $k,l\geq0$ and $t\in \mathbb{R}$ we have
\begin{equation}
\begin{split}
\sum_{r+m=k}(-1)^m&
\left(
\sum_{k_1+\cdots+k_{r+1}=l}\xi_t(k_1+1,\ldots,k_{r+1}+1;m+1)
\right)\\
&=
\sum_{a+b=k}(-1)^at^b\binom{b+l}{b}\zeta(\{1\}^a,b+l+2).\nonumber
\end{split}
\end{equation}
In particular, when $t=0$,
\[
\sum_{r+m=k}(-1)^m
\left(
\sum_{k_1+\ldots+k_{r+1}=l}\xi(k_1+1,\ldots,k_{r+1}+1;m+1)
\right)
=
(-1)^k\zeta(\{1\}^k,l+2).
\]
\end{Thm}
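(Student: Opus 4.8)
The plan is to deduce Theorem \ref{main3} from Proposition \ref{sum-int} by differentiating with respect to $t$ exactly $l$ times and then evaluating at the original variable, in the same spirit as the passage from the $l=0$ case to the general case in Theorem \ref{main2}. Concretely, I would apply $\frac{d^l}{dt^l}$ to both sides of the identity
\[
\sum_{r+m=k}(-1)^m\xi_t(\{1\}^{r+1};m+1)=\sum_{a+b=k}(-1)^at^b\zeta(\{1\}^a,b+2).
\]

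On the left-hand side, the key input is Proposition \ref{tAK-MZV-diff}: for each fixed $r\ge l$ we have $\frac{d^l}{dt^l}\xi_t(\{1\}^{r+1};m+1)=l!\sum_{k_1+\cdots+k_{r-l+1}=l}\xi_t(k_1+1,\ldots,k_{r-l+1}+1;m+1)$, and the terms with $r<l$ differentiate to zero. So after reindexing $r\mapsto r+l$ (equivalently writing the surviving depth as $r+1$ with the original $r$ now satisfying $r+l+m=k$), the left side becomes $l!\sum_{r+m=k-l}(-1)^m\sum_{k_1+\cdots+k_{r+1}=l}\xi_t(k_1+1,\ldots,k_{r+1}+1;m+1)$. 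I should double-check the index bookkeeping here against the statement: in Theorem \ref{main3} the outer sum is over $r+m=k$ with inner weight parameter $l$, so I would actually start from Proposition \ref{sum-int} with $k$ replaced by $k+l$, differentiate $l$ times, and divide by $l!$; then the left side matches the claimed expression exactly.

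On the right-hand side, I differentiate $\sum_{a+b=k+l}(-1)^a t^b\zeta(\{1\}^a,b+2)$ $l$ times in $t$: only terms with $b\ge l$ survive, contributing $\frac{b!}{(b-l)!}(-1)^a t^{b-l}\zeta(\{1\}^a,b+2)$. Dividing by $l!$ gives $\binom{b}{l}$, and reindexing $b\mapsto b+l$ turns this into $\sum_{a+b=k}(-1)^a t^b\binom{b+l}{l}\zeta(\{1\}^a,b+l+2)$, which is the right-hand side of Theorem \ref{main3} since $\binom{b+l}{l}=\binom{b+l}{b}$. Setting $t=0$ kills every term except $b=0$, leaving $(-1)^k\zeta(\{1\}^k,l+2)$, which establishes the stated special case.

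The only genuinely delicate point is the legitimacy of termwise differentiation: $\xi_t(\cdots;m+1)$ is a polynomial in $t$ (visible from Proposition \ref{tAK-MZV-yint} or from expanding the integrand in Lemma \ref{tAK-MZV-int}), and for fixed $k$ only finitely many $(r,m)$ occur in the outer sum, so the left side of Proposition \ref{sum-int} is a polynomial identity in $t$ and differentiation is unproblematic; one could alternatively differentiate Lemma \ref{tAK-MZVG-int} under the integral/series sign, justified by the stated convergence ranges $|tx|<1$, $|x|<1$. Thus the main obstacle is purely combinatorial: verifying that the index substitutions on both sides line up precisely with the statement, including the constant $l!$ cancelling and the binomial coefficient emerging with the right arguments. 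Once that is checked, the proof is just "differentiate Proposition \ref{sum-int} $l$ times, apply Proposition \ref{tAK-MZV-diff}, and compare coefficients."
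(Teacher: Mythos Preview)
Your proposal is correct and follows exactly the paper's own approach: differentiate Proposition \ref{sum-int} (with $k$ replaced by $k+l$) $l$ times with respect to $t$, apply Proposition \ref{tAK-MZV-diff} on the left, compute the derivative directly on the right, and divide through by $l!$. The paper compresses this into a single sentence, but your index bookkeeping (the reindexing $r\mapsto r+l$ and $b\mapsto b+l$, the vanishing of the $r<l$ terms, and the emergence of $\binom{b+l}{b}$) is the correct unpacking of that sentence.
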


\begin{proof}
This is proved by differentiating both sides of Theorem \ref{sum-int} $l$ times with respect $t$ and applying  Proposition \ref{tAK-MZV-diff}. 
\end{proof}

\section*{Acknowledgment}
The author would like to thank my supervisor Professor Hirofumi Tsumura for his kind advice and helpful comments. The author also thanks Kyosuke Nishibiro, a senior of the lab, for his advice on writing paper. Finally the author also thanks Professor Shuji Yamamoto and Professor Yasuo Ohno for their valuable suggestions.

\begin{bibdiv}
	\begin{biblist}

	    \bib{AO}{article}{
	    	author={T. Aoki}, author={Y. Ohno},
	    	title={Sum relations for multiple zeta values and connection formulas for the Gauss hypergeometric functions},
	    	journal={Publ. RIMS, Kyoto Univ.},
	    	volume={41},
	    	date={2005},
	    	number={},
	    	pages={329--337},
	    	issn={},
	    }

	    \bib{AK}{article}{
	    	author={T. Arakawa},author={M. Kaneko},
	    	title={Multiple zeta values, poly-Bernoulli numbers, and related zeta functions},
	    	journal={Nagoya Math. J.},
	    	volume={153},
	    	date={1999},
	    	number={},
	    	pages={189--209},
	    	issn={},
	    }

	 \bib{CE}{article}{
	    	author={Chen, K.-W.},author={Eie, M.},
	    	title={Some special Euler sums and $zeta^{\star}(r+2,\{2\}^n)$},
	    	journal={arXiv:1810.11795v1},
	    	volume={},
	    	date={2018},
	    	number={},
	    	pages={},
	    	issn={},
	    }

	    \bib{KT}{article}{
	    	author={M. Kaneko},	author={H. Tsumura},
	    	title={Multi-poly-Bernoulli numbers and related zeta functions},
	    	journal={Nagoya math. J.},
	    	volume={232},
	    	date={2018},
	    	number={},
	    	pages={19--54},
	    	issn={},
	    }

	    \bib{KO}{article}{
	    	author={N. Kawasaki},author={Y. Ohno},
	    	title={Combinatorial proofs of identities for special values of Arakawa-Kaneko multiple zeta functions},
	    	journal={Kyushu J. Math.},
	    	volume={72},
	    	date={2018},
	    	number={},
	    	pages={215--222},
	    	issn={},
	    }

	    \bib{Le}{article}{
	    	author={T. Q. T. Le}, author={J. Murakami},
	    	title={Kontsevich's integral for the Homfly polynomials and relations between values of multiple zeta functions},
	    	journal={Topology Appl},
	    	volume={62(2)},
	    	date={1995},
	    	number={},
	    	pages={193--206},
	    	issn={},
	    }

	    \bib{ZC}{article}{
	    	author={Z. Li},author={C. Qin},
	    	title={Some relation of interpolated multiple zeta values},
	    	journal={Int. J. Math},
	    	volume={28},
	    	date={2017},
	    	number={1750033},
	    	pages={},
	    	issn={},
	    }

	 \bib{OW}{article}{
	    	author={Y. Ohno},author={H. Wayama},
	    	title={Interpolation between Arakawa-Kaneko and Kaneko-Tsumura multiple zeta functions.},
	    	journal={Comment. Math. Univ. St. Pauli},
	    	volume={},
	    	date={2020},
	    	number={68},
	    	pages={83--91},
	    	issn={},
	    }

	 \bib{OW2}{article}{
	    	author={Y. Ohno},author={H. Wayama},
	    	title={Interpolated multiple zeta functions of Arakawa-Kaneko type Various aspects of multiple zeta functions -- in honor of Professor Kohji Matsumoto's 60th birthday},
	    	journal={Adv. Stud. Pure Math},
	    	volume={84},
	    	date={2020},
	    	number={},
	    	pages={361--366},
	    	issn={},
	    }

	    \bib{Y1}{article}{
	    	author={S. Yamamoto},
	    	title={Interpolation of multiple zeta and zeta-star values},
	    	journal={J. Algebra},
	    	volume={385},
	    	date={2013},
	    	number={},
	    	pages={102--114},
	    	issn={},
	    }

	    \bib{Y2}{article}{
	    	author={S. Yamamoto},
	    	title={Multiple zeta-star values and multiple integrals.Various aspects of multiple zeta values },
	    	journal={RIMS Kôkyûroku Bessatsu},
	    	volume={B68},
	    	date={2017},
	    	number={},
	    	pages={3--14},
	    	issn={},
	    }

\end{biblist}
\end{bibdiv}
\end{document}